\documentclass[a4paper,12pt]{article}
 \usepackage[english]{babel}
  \usepackage{amsmath,amsfonts,amssymb,amsthm}

  \usepackage{hyperref} 

 \usepackage[active]{srcltx} 
 \usepackage{color,soul} 

  \textheight=8.2 true in
   \textwidth=5.5 true in
    \topmargin 30pt
     \setcounter{page}{1}


\newtheorem{theorem}{Theorem}[section]

\newtheorem{lemma}[theorem]{Lemma}
\newtheorem{proposition}[theorem]{Proposition}

\theoremstyle{definition}
\newtheorem{definition}[theorem]{Definition}
\newtheorem{remark}{Remark}

\def\N{\mathbb{N}}
\def\Z{\mathbb{Z}}

\def\R{\mathbb{R}}

\let\e=\varepsilon
\let\vp=\varphi
\let\t=\tilde
\let\ol=\overline
\let\ul=\underline
\let\.=\cdot
\let\0=\emptyset
\let\mc=\mathcal
\def\ex{\exists\;}

\def\dv{\text{\rm div}\,}

\def\Tr{\text{\rm Tr}}

\newcommand{\esssup}{\mathop{\rm ess{\,}sup}}
\newcommand{\essinf}{\mathop{\rm ess{\,}inf}}
\newcommand{\su}[2]{\genfrac{}{}{0pt}{}{#1}{#2}}

\def\eq#1{{\rm(\ref{eq:#1})}}
\def\thm#1{Theorem \ref{thm:#1}}
\def\seq#1{(#1_n)_{n\in\N}}
\def\limn{\lim_{n\to\infty}}

\def\pe{principal eigenvalue}

\newenvironment{formula}[1]{\begin{equation}\label{eq:#1}}
                       {\end{equation}\noindent}

\def\Fi#1{\begin{formula}{#1}}
\def\Ff{\end{formula}\noindent}

\setlength{\marginparwidth}{1.2in}

\tolerance = 1500
\hoffset = 0pt
\voffset = 0pt
\textwidth = 470pt
\textheight = 640pt
\topmargin = 0pt
\headheight = 0pt
\headsep = 0pt
\oddsidemargin = 0pt
\evensidemargin = 0pt
\marginparwidth = 10pt
\marginparsep = 10pt
\pagenumbering{arabic}
\relpenalty=10000
\binoppenalty=10000


\setstcolor{red}

\def\lm#1{\left\lfloor{#1}\right\rfloor}
\def\um#1{\left\lceil{#1}\right\rceil}

\usepackage{xcolor}

\begin{document}
\title{Transition waves for Fisher-KPP equations with general time-heterogeneous 
and space-periodic coefficients}
\author{Gr\'egoire Nadin 
\footnote{CNRS, UMR 7598, Laboratoire Jacques-Louis Lions, F-75005, Paris, France}  
\footnote{Sorbonne Université\'es, UPMC Univ Paris 06, UMR 7598, Laboratoire Jacques-Louis Lions, F-75005, Paris,
France}
\and Luca Rossi 
\footnote{ Dipartimento di Matematica,
  Universit\`a di Padova, via Trieste 63, 35121 Padova, Italy}
}
\maketitle

\begin{abstract}
This paper is devoted to existence and non-existence results for generalized transition waves solutions of space-time heterogeneous Fisher-KPP equations. 
When the coefficients of the equation are periodic in space but otherwise depend in a fairly general fashion on time, we prove that such waves exist as soon as their speed is sufficiently large
in a sense. When this speed is too small, transition waves do not exist anymore, this result holds without assuming periodicity in space. 
These necessary and sufficient conditions are proved to be optimal when the coefficients are periodic both in space and time. 
Our method is quite robust and extends to general non-periodic space-time heterogeneous coefficients, showing that transition waves solutions of the nonlinear equation exist as soon as 
one can construct appropriate solutions of a given linearized equation. 

\vspace{0.2in}\noindent \textbf{Key words}: Fisher-KPP equation, reaction-diffusion, travelling waves, generalized transition waves, generalized principal eigenvalues. \\
\vspace{0.1in}\noindent \textbf{2010 Mathematical Subject Classification}: 35B40, 35K57, 35B51, 35K10, 35P05.

\end{abstract}

\setstcolor{red}

The research leading to these results has received funding from the European
Research Council under the European Union's Seventh Framework Programme
(FP/2007-2013) / ERC Grant Agreement n.321186 - ReaDi -Reaction-Diffusion
Equations, Propagation and Modelling. Luca Rossi was partially supported by 
GNAMPA-INdAM. 


\section{Introduction}


We are concerned with transition waves solutions of the space-time heterogeneous
reaction-diffusion equation
\begin{equation}\label{eq:princip}
\partial_t u-\Tr(A(x,t)D^2u)+q(x,t)\. Du=f(x,t,u),\quad x\in\R^N,\
t\in\R.
\end{equation}
Here $D$ and $D^2$ denote respectively the gradient and the Hessian with
respect to the space variables.
We assume that the terms in the equation are periodic in $x$, with the same
period.
The matrix field $A$ is uniformly elliptic and the nonlinearity
$f(x,t,\.)$ vanishes at $0$ and $1$. The steady states $0$ and $1$ are
respectively unstable and stable.
\\

When the coefficients do not depend on $(x,t)$, equation (\ref{eq:princip})
becomes a classical homogeneous monostable reaction-diffusion equation. 
The pioneering works on such equations are due to Kolmogorov, Petrovski and
Piskunov \cite{KPP} and Fisher \cite{Fisher} in the 30's, when $f(u) = u(1-u)$. 
They investigated the existence of travelling wave solutions,
that is, solutions of the form $u(x,t) = \phi (x\cdot e -ct)$, with $\phi
(-\infty) = 1$, $\phi (+\infty) = 0$, $\phi>0$. The quantity  
$c\in\R$ is the speed of the wave and $e\in\mathbb{S}^{N-1}$ is its direction.
Kolmogorov, Petrovski and Piskunov \cite{KPP} proved that when $A=I_N$, $q\equiv
0$ and $f=u(1-u)$,
there exists $c^*>0$ such that \eq{princip} admits travelling waves of
speed $c$ if and only if $c\geq c^*$. This property was extended to more
general 
monostable nonlinearities by Aronson and Weinberger \cite{AronsonWeinberger}.
The properties (uniqueness, stability, attractivity, decay at infinity) of these
waves have been extensively studied 
since then. 
\\



An increasing attention has been paid to heterogeneous reaction-diffusion since
the 2000's. In particular, the existence of appropriate generalizations of  
travelling waves solutions has been proved for various classes of
heterogeneities such as shear \cite{BerestyckiNirenberg}, time periodic
\cite{Alikakos},  
space periodic  \cite{BerestyckiHamel, BHR2, Xinperiodic}, space-time periodic
\cite{NolenRuddXin, Nadinptf}, time almost periodic \cite{Shenap2} and time
uniquely ergodic \cite{Shenue} ones,  
under several types of hypotheses on the nonlinearity. 
Now, the topical question is to understand whether reaction-diffusion equations
with general heterogeneous coefficients admit wave-like solutions or not.  
A generalization of the notion of travelling waves has been given by Berestycki
and Hamel \cite{BerestyckiHamelgtw, BerestyckiHamelgtw2}.

\begin{definition}\label{def:gtw}\cite{BerestyckiHamelgtw, BerestyckiHamelgtw2}
A {\em generalized transition wave} (in the direction
$e\in S^{N-1}$) is a positive time-global solution $u$ of \eq{princip} 
such that there exists a function $c\in L^\infty(\R)$ satisfying
\Fi{limits}
\lim_{x\.e\to-\infty} u\Big(x+e\int_0^t c(s)ds,t\Big)=1,\qquad
\lim_{x\.e\to+\infty} u\Big(x+e\int_0^t c(s)ds,t\Big)=0,
\Ff
uniformly with respect to $t\in\R$
\footnote{\ For a given function $g=g(x,t)$, the condition
$\lim_{x\.e\to\pm\infty}g(x,t)=l$
uniformly with respect to $t\in\R$ means that
$$\lim_{r\to+\infty}\sup_{\pm x\.e>r,\ t\in\R}|g(x,t)-l|=0.$$}. 
The function $c$ is called the {\em speed} of the generalized transition wave
$u$, and $\phi(x,t):=u(x+e\int_0^t c(s)ds,t)$ is the associated {\em profile}.
\end{definition}

The profile of a generalized transition wave satisfies
$$\lim_{x\.e\to-\infty}\phi(x,t)=1,\quad
\lim_{x\.e\to+\infty}\phi(x,t)=0,\quad
\text{uniformly with respect to }t\in\R.$$
It is clear that any perturbation of $c$ obtained by adding a function with
bounded integral is still a speed of $u$, with a different
profile. 
Reciprocally, if $\tilde{c}$ is another speed associated with $u$, then it is 
easy to check that 
$t\mapsto\int_0^t(c-\tilde{c})$ is bounded.
Obviously, all the notions of waves used previously when the coefficients belong
to particular classes of heterogeneities can be viewed as transition waves. 

The existence of such waves have been proved for one-dimensional space
heterogeneous reaction-diffusion with ignition-type nonlinearities (that is,
$f(x,u) = 0$ if $u\in [0,\theta)\cup \{1\}$ and $f(x,u)>0$
if $u\in (\theta,1)$) in parallel ways by Nolen and Ryzhik \cite{NolenRyzhik}
and Mellet, Roquejoffre and Sire \cite{MelletRoquejoffreSire}, and their
stability was proved in \cite{MelletNolenRoquejoffreRyzhik}. 
For space heterogeneous monostable nonlinearities, when $f(x,u) >0$
when $u\in (0,1)$ and $f(x,0) = f(x,1) = 0$, transition waves might not exist
\cite{NRRZ} in general. 
Let mention that this justified the introduction of the alternative notion of
{\em critical travelling wave} by the first author in \cite{NadinCTW} for
one-dimensional equations. 
Some existence results have also been obtained by Zlatos for partially periodic multi-dimensional
equations of ignition-type \cite{Zlatosdisordered}. 

When the coefficients only depend on $t$ in a general way, the existence of
transition waves was first proved by Shen for bistable nonlinearities
\cite{Shenbistable} (that is, nonlinearities vanishing at $u=0$ and $u=1$ 
but negative near these two equilibria) and for monostable equations with time
uniquely ergodic coefficients \cite{Shenue}. The 
case of general time heterogeneous monostable equations was investigated by the
two authors in \cite{NR1}. 
As observed in \cite{NR1}, the notions of least and upper mean play a crucial
role in such frameworks.

\begin{definition}\label{def:mean}
The {\em least mean} (resp.~the {\em upper mean}) {\em over} $\R$
of a function $g\in L^\infty(\R)$ is given by
$$\lfloor g\rfloor:=\lim_{T\to+\infty}\,\inf_{t\in\R}
\frac{1}{T}\int_t^{t+T}g(s)ds,\qquad\Big(\text{resp. }
\lceil g\rceil:=\lim_{T\to+\infty}\,\sup_{t\in\R}
\frac{1}{T}\int_t^{t+T}g(s)ds\ \Big).$$
\end{definition}

As shown in Proposition 3.1 of \cite{NR1}, the definitions of $\lm{g}$, 
$\um{g}$ do not change if one replaces $\lim_{T\to+\infty}$ with $\sup_{T>0}$ 
and $\inf_{T>0}$ respectively in the above expressions; this shows that 
$\lm{g}$, $\um{g}$ are well defined for any $g\in L^\infty(\R)$.
Notice that $g$ admits a uniform mean $\langle g\rangle$, that is
$\langle g\rangle:=\lim_{T\to+\infty}\frac{1}{T}\int_t^{t+T}g(s)ds$ exists
uniformly with respect to $t\in\R$, if and only if 
$\lfloor g\rfloor=\lceil g\rceil =\langle g\rangle$. This is the case in
particular when the coefficients are uniquely ergodic. 

Note that if $c$ and $\tilde{c}$ are two speeds associated with the same wave 
$u$, then $c-\tilde{c}$ has a bounded integral and thus 
$\lfloor c\rfloor = \lfloor \tilde{c} \rfloor$.

The two authors proved in \cite{NR1} that when $A\equiv I_N$, $q\equiv 0$ and
$f$ only depends on $(t,u)$ and is concave and positive with respect to $u\in
(0,1)$, there exists a speed $c_*>0$
such that for all $\gamma>c_*$ and $|e|=1$, equation (\ref{eq:princip}) admits a
generalized transition wave with speed $c=c(t)$ in the direction $e$ such that
$\lfloor c\rfloor = \gamma$, while no such waves exist 
when $\gamma<c_*$. 

\bigskip

When the coefficients not only depend on $t$ in a general way but also on $x$
periodically, some of the above results have been extended. 
Assuming in addition that the coefficients are 
uniquely ergodic and recurrent with respect to $t$ and that $A\equiv I_N$, Shen
\cite{Shenueper}
proved the existence of a speed $c_*$ such that for all $\gamma>c_*$, there
exists a 
generalized transition waves for monostable equations with speed $c$  

The case of space periodic and time general monostable equations was first
studied by the second author and Ryzhik \cite{RossiRyzhik}, 
under the additional assumption that the dependences in $t$ and $x$ are
separated, in the sense that $A$ and $q$ only depend on $x$, periodically,
while $f$ only depends on $(t,u)$. 
They proved both the existence of generalized transition waves of speed $c$ such
that $\lfloor c\rfloor >c_*$ and the non-existence of such waves with $\lfloor
c\rfloor <c_*$. 
Moreover, they provided a more general non-existence result, without assuming
that the dependence on $x$ of $A$ and $q$ is periodic. 

The aim of the present paper is to consider the general case
of coefficients depending on both $x$ and $t$. As in \cite{RossiRyzhik}, we assume
the periodicity in $x$ only for the existence result.


\section{Hypotheses and results}

\subsection{Statement of the main results}

Throughout the paper, the terms in \eq{princip} will always be assumed to
satisfy the following (classical) regularity hypotheses: 
\Fi{hyp-A}
\begin{cases}
A\text{ is symmetric, uniformly continuous},\\
\ex0<\ul\alpha\leq\ol\alpha,\quad\forall (x,t)\in\R^{N+1},
\quad \ul\alpha I\leq A(x,t)\leq\ol\alpha I,
\end{cases}
\Ff
\Fi{hyp-q}
q\,\text{ is bounded and uniformly continuous on }\R^{N+1},
\Ff
\Fi{hyp-f}
\begin{cases} 
f \,\text{ is a Caratheodory function on }\R^{N+1}\times[0,1],\\
\exists\delta>0,\ f(x,t,\.)\in W^{1,\infty}([0,1])\cap C^1([0,\delta)),
\text{ uniformly in }(x,t)\in\R^{N+1}.\\
\end{cases}
\Ff
The assumption that $q$ is uniformly continuous is a technical hypothesis used 
in the proofs to pass to the limit in sequences of translations of the 
equation. It could be replaced by $\dv q=0$.
We further assume that $f$ is of monostable type, 
$0$ being the unstable equilibrium and $1$ being the stable one. Namely,
\Fi{f(0)}
\forall (x,t)\in\R^{N+1},\quad
f(x,t,0)=0,
\Ff 
\Fi{f(1)}
\forall (x,t)\in\R^{N+1},\quad f(x,t,1)=0,
\Ff
\Fi{f>0} \forall u\in (0,1),\quad \inf_{(x,t)\in \R^{N+1}}f(x,t,u)>0.\Ff
In order to derive the existence result, we need some additional hypotheses. The
first one is the standard KPP condition:
\Fi{hyp-KPP}
\forall (x,t)\in\R^{N+1},\ u\in[0,1],\quad f(x,t,u)\leq \mu (x,t) u,
\Ff
where, here and in the sequel, $\mu$ denotes the function defined as follows:
$$\mu(x,t):=\partial_u f(x,t,0).$$ 
Conditions \eq{f>0}, \eq{hyp-KPP} imply that $\inf\mu>0$.
The second condition is
\Fi{C1gamma}
\exists C>0,\ \delta,\nu\in(0,1],\quad \forall x\in\R^N,\ t\in\R,\ u\in
(0,\delta),\quad 
f(x,t,u)\geq \mu (x,t) u-Cu^{1+\nu}.
\Ff
Note that a sufficient condition for \eq{C1gamma} to hold is 
$f(x,t,\.)\in C^{1+\nu}([0,\delta])$, uniformly with respect to $x,t$.
%
%
The last condition for the existence result is 
\Fi{periodic}
\exists l=(l_1,\dots,l_N)\in\R_+^N,\quad
\forall t\in\R,\ u\in(0,1),\quad
A,\ q,\ f\ \text{ are $l$-periodic in }x,
\Ff
where a function $g$ is said to be $l$-periodic in $x$ if it satisfies
$$\forall j\in\{1,\dots,N\},\
\forall x\in\R^N,\quad g(x+l_je_j)=g(x),$$
$(e_1,\dots,e_N)$ being the canonical basis of $\R^N$.

When we say that a function is a (sub, super) solution of \eq{princip} we
always mean that it is between $0$ and $1$. 
We deal with strong solutions whose derivatives $\partial_t$, $D$, $D^2$ belong
to some $L^p(\R^{N+1})$, $p\in(1,\infty)$. Many of our statements and
equations,
such as \eq{princip}, are understood
to hold a.e., even if we omit to specify it, and $\inf$, $\sup$ are used in
place of $\essinf$, $\esssup$.
\\


The main results of this paper consist in a sufficient and a necessary
conditions for the
existence of generalized transition waves, expressed in terms of their speeds. 

\begin{theorem}\label{thm:ex}
Under the assumptions \eq{hyp-A}-\eq{periodic}, for all $e \in
S^{N-1}$, there exists $c_*\in\R$ such
that for every $\gamma>c_*$,
there is a generalized transition wave in the direction $e$ with a speed
$c$ such that $\lfloor
c\rfloor= \gamma$.
\end{theorem}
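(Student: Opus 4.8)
The plan is to construct the wave via the classical Fisher–KPP strategy of squeezing between sub- and super-solutions, the key new ingredient being the control of the linearized problem through a generalized principal eigenvalue associated with the time-heterogeneous, space-periodic coefficients. First I would linearize \eq{princip} at $u=0$, obtaining the operator $\partial_t u-\Tr(A D^2u)+q\.Du-\mu u$, and for each $\lambda\in\R$ search for solutions of the form $u(x,t)=e^{-\lambda(x\.e-\int_0^t c(s)ds)}\psi(x,t)$ with $\psi$ bounded, positive, $l$-periodic in $x$. Plugging this ansatz in reduces the linear equation to a parabolic equation for $\psi$ on the torus $\R^N/l\Z^N\times\R$; the natural object governing its solvability is the generalized principal eigenvalue $k(\lambda)$ of the time-dependent periodic operator, defined (as in \cite{NR1,RossiRyzhik}) through the least/upper mean of the associated Rayleigh-type quotient. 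One shows $k$ is convex in $\lambda$, $k(0)=\lm{\mu}>0$ — wait, more precisely $k(0)$ relates to $\essinf$ of the periodic principal eigenvalue of $-\Tr(AD^2\cdot)+q\.D-\mu$ in $x$ — and that $\lambda\mapsto k(\lambda)/\lambda$ attains a positive minimum $c_*$ over $\lambda>0$. For every $\gamma>c_*$ one then picks $\lambda_\gamma>0$ with $k(\lambda_\gamma)/\lambda_\gamma<\gamma$, and sets $c(t)$ so that $\lm{c}=\gamma$ while $\lambda_\gamma\big(\gamma-\tfrac1t\int_0^t c\big)$ stays compatible with a genuine (not merely generalized-mean) supersolution; this is exactly the point where the least mean enters and where one uses that $\gamma>k(\lambda_\gamma)/\lambda_\gamma$ with strict inequality to absorb oscillations.

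Next I would build the \textbf{supersolution}. Using \eq{hyp-KPP}, any positive solution $\ol u$ of the linearized equation is a supersolution of \eq{princip}; truncating $\min(\ol u,1)$ gives a supersolution between $0$ and $1$ that, after the exponential change of variables, has the right limits $\phi(-\infty,t)=1$, $\phi(+\infty,t)=0$ uniformly in $t$ provided $\psi$ is bounded above and below. For the \textbf{subsolution} I would use \eq{C1gamma}: look for something of the form $\max\big(0,\ \ol u - M\,e^{-(\lambda_\gamma+\eta)(x\.e-\int_0^t c)}\tilde\psi\big)$ for small $\eta>0$ and large $M$, where the second exponential decays faster; the condition \eq{C1gamma} with exponent $1+\nu$ forces the choice $\eta<\nu\lambda_\gamma$ and the strict convexity/position of $k$ guarantees that $e^{-(\lambda_\gamma+\eta)(\cdots)}\tilde\psi$ can also be taken to be a supersolution of a suitable linear equation with a controlled zero-order term, so that the difference is a subsolution of the nonlinear equation. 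One must check the subsolution is nontrivial (positive somewhere) and lies below the supersolution for $M$ large.

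Having the ordered pair, I would obtain the wave by a \textbf{limiting procedure}: solve the Cauchy problem for \eq{princip} with initial data at time $-n$ equal to (a cutoff of) the subsolution, use the comparison principle to sandwich the solution between sub- and supersolution on $[-n,\infty)$, and pass to the limit $n\to\infty$ using parabolic estimates (interior $L^p$ and Schauder, with \eq{hyp-A}–\eq{hyp-q} and the uniform continuity of $q$ to pass to the limit in translated equations) to get an entire solution $u$ trapped between the two barriers. The uniform-in-$t$ limits of the barriers then give \eq{limits} for this $u$ with the chosen speed $c$, and by construction $\lm{c}=\gamma$. One should also verify positivity and that $u\not\equiv 0,1$, which follows from the strong maximum principle and the strict ordering of the barriers on a translate.

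The main obstacle I expect is \textbf{step one}: making sense of, and establishing the needed properties of, the generalized principal eigenvalue $k(\lambda)$ for the operator that is periodic in $x$ but only $L^\infty$ in $t$ — in particular its convexity, the existence and positivity of $c_*=\min_{\lambda>0}k(\lambda)/\lambda$, and the construction of a bona fide positive periodic-in-$x$ solution $\psi$ of the linear equation with prescribed least mean of the "effective speed". This requires combining the periodic principal eigenvalue theory (Krein–Rutman on the torus, giving a principal eigenfunction depending measurably on $t$) with the least-mean machinery of \cite{NR1}; reconciling the two, and in particular controlling $\psi$ from above and below uniformly in $t$ despite the mere boundedness of the coefficients in $t$, is the technical heart of the argument. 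The sub-/supersolution bookkeeping with the exponent $1+\nu$ is delicate but routine once the linear theory is in place.
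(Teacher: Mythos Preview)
Your overall scheme (linearize, exponential sub/supersolutions with periodic correctors, limit of Cauchy problems) matches the paper, but the linear step has a genuine gap. You define $c_*=\min_{\lambda>0}k(\lambda)/\lambda$ relying on convexity of $k$; in the time-periodic case this works, but for general $t$-dependence the paper explicitly states that convexity of $\lambda\mapsto\lambda\lm{c_\lambda}$ is \emph{not known}---only local Lipschitz continuity is proved (Lemma~\ref{lem:c_l}). The paper therefore does not define $c_*$ as a minimum. Instead, the speed $c_\lambda(t)$ is \emph{read off} from the linear solution $\eta_\lambda$ via $c_\lambda=S_\lambda'$ with $S_\lambda(t)\approx\frac1\lambda\ln\|\eta_\lambda(\cdot,t)\|_{L^\infty(\R^N)}$ (Lemma~\ref{lem:Slambda}; so you do not get to ``set $c(t)$'' with prescribed least mean), and one introduces
\[
\Lambda:=\big\{\lambda>0:\exists\,\ol k>0,\ \forall\,0<k<\ol k,\ \lm{c_\lambda-c_{\lambda+k}}>0\big\},
\]
shows $\Lambda=(0,\lambda_*)$ (Lemma~\ref{lem:Lambda}), and sets $c_*:=\lm{c_{\lambda_*}}$. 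This is not cosmetic: the subsolution construction in Section~\ref{sec:ccl} needs, for some $\lambda'\in(\lambda,(1+\nu)\lambda)$, that the correction term $e^{\sigma(t)-\lambda'(x\cdot e-S_\lambda(t)+S_{\lambda'}(t))}\eta_{\lambda'}$ be a \emph{strict} supersolution of the linearized equation in the frame moving with $c_\lambda$, which amounts to $\inf_t\big(\sigma'+\lambda'(c_\lambda-c_{\lambda'})\big)>0$ for a suitable $\sigma\in W^{1,\infty}(\R)$, i.e.\ (by the characterization \eq{lm}) $\lm{c_\lambda-c_{\lambda'}}>0$. Since the least mean is only superadditive, the inequality $\lm{c_\lambda}>\lm{c_{\lambda'}}$---which is all that convexity of $k$ or monotonicity of $k(\lambda)/\lambda$ would give---does \emph{not} imply this; the set $\Lambda$ is tailored precisely to deliver the stronger condition.

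A second gap is the conclusion $u\to1$ as $x\cdot e\to-\infty$. Your barriers only yield $u\geq\omega$ there for some constant $\omega\in(0,1)$ (behind the front the subsolution is merely $\max(\omega,v)$). The paper closes this with a compactness argument: take sequences $(x_n,t_n)$ realising $\vartheta:=\lim_{r\to-\infty}\inf u$, translate (using spatial periodicity to stay in a compact cell), pass to the limit in the equation to obtain a constant solution $\vartheta$ of a limiting equation, and then use \eq{f>0} together with the strong maximum principle to force $\vartheta\in\{0,1\}$, hence $\vartheta=1$ since $\vartheta\geq\omega>0$.
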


The minimal speed $c_*$ we construct is explicitly given by (\ref{eq:defc}),
(\ref{eq:defLambda}) and (\ref{eq:defc*}). 
A natural question is to determine whether our construction gives an optimal
speed or not, that is, do generalized transition waves 
with speed $c$ such that $\lfloor c\rfloor<c_*$ exist? 
One naturally starts with checking if our $c_*$ coincides with
the optimal speed known to exist in some particular cases, such as space-time
periodic or space independent.
In Section \ref{sec:method} we show that this is the case.
The answer in the general, non space-periodic, case is only partial. It is 
contained in the next
theorem, where, however, we can relax the monostability 
hypotheses \eq{f>0}-\eq{hyp-KPP} by
\Fi{mu>0}
\left\lfloor\inf_{x\in\R^N}\mu(x,\.)\right\rfloor>0,
\Ff 
and we can drop \eq{f(1)}, \eq{C1gamma} as well as \eq{periodic}.
We actually need an extra regularity assumption on $A$:
\Fi{hyp-Aextra}
\text{$A$ is uniformly H\"older-continuous in $x$, uniformly with 
respect to $t$}.
\Ff
This ensures the validity of some a 
priori Lipschitz estimates quoted from \cite{PP}, that will be needed in the 
sequel. It is not clear to us if such estimates hold without 
\eq{hyp-Aextra}.

\begin{theorem}\label{thm:nex}
Under the assumptions \eq{hyp-A}-\eq{f(0)}, \eq{mu>0}-\eq{hyp-Aextra}, for all 
$e
\in S^{N-1}$, there exists $c^*\in\R$
such
that if $c$ is the speed of a generalized transition wave in the direction $e$
then $\lm{c}\geq c^*$.
\end{theorem}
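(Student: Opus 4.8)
The plan is to argue by contradiction. The key observation is that a generalized transition wave $u$ with speed $c$ stays bounded away from $0$ on the moving half-space $\{x\cdot e\le\xi(t)-R\}$, where $\xi(t):=\int_0^tc(s)\,ds$, while near $u=0$ the reaction term dominates the linearized one from below; hence $u$ lies above a subsolution of a slightly subcritical linearized equation which invades the region ahead of $\xi(t)$. If $\lm c$ were too small this invasion would outrun, in the sense of the least mean, the front $t\mapsto\xi(t)$, contradicting the fact that $u$ vanishes far ahead of the front. Throughout, $c^*$ denotes the threshold $\lim_{\e_0\to0^+}c^*_{\e_0}$, where $c^*_{\e_0}:=\inf_{\lambda>0}\lm{c^{\e_0}_\lambda}$ and, for $\e_0,\lambda>0$, the function $c^{\e_0}_\lambda$ is characterized by the property that the linearized equation $\partial_tv-\Tr(AD^2v)+q\cdot Dv=(\mu-\e_0)v$ admits a solution of the form $v(x,t)=\phi^{\e_0}_\lambda(x,t)\exp\!\big(-\lambda\big(x\cdot e-\int_0^tc^{\e_0}_\lambda(s)\,ds\big)\big)$ with $\phi^{\e_0}_\lambda$ bounded between two positive constants; this is the content of $\eq{defc}$, $\eq{defLambda}$, $\eq{defc*}$, and $c^*\in\R$ thanks to $\eq{mu>0}$.

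Assume, for contradiction, that $u$ is a generalized transition wave in the direction $e$ with a speed $c$ such that $\gamma:=\lm c<c^*$, and fix $\e_0>0$ so small that $\gamma<c^*_{\e_0}$ still holds; set $3\delta:=c^*_{\e_0}-\gamma>0$. By Definition~\ref{def:gtw} (with threshold $1/2$) there is $R_0>0$ with $u(x,t)\ge1/2$ whenever $x\cdot e\le\xi(t)-R_0$, and for every $\e>0$ there is $R_\e>0$ with $u(x,t)\le\e$ whenever $x\cdot e\ge\xi(t)+R_\e$, both uniformly in $t\in\R$. By the uniform $C^1$ regularity of $f$ at $0$ in $\eq{hyp-f}$ there is $\delta_0\in(0,1/2)$ such that $f(x,t,s)\ge(\mu(x,t)-\e_0)s$ for all $(x,t)\in\R^{N+1}$, $s\in[0,\delta_0]$.

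The main point is then to construct an expanding, fixed-amplitude subsolution $\underline u$ of $\eq{princip}$ with the following features: $0\le\underline u\le\delta_0$; at each time $t$ the support of $\underline u(\cdot,t)$ is contained in a slab $\{x\cdot e\in[\sigma(t)-W,\sigma(t)]\}$ with $W$ fixed and with $\lm{\sigma'}$ as close as one wishes to $c^*_{\e_0}$, in particular $\lm{\sigma'}>\gamma+2\delta$; and there are $\eta_1\in(0,\delta_0]$ and $C_0>0$, independent of any spatial translation of $\underline u$, such that for every $t$ one has $\underline u(x,t)\ge\eta_1$ at some point $x$ with $x\cdot e\ge\sigma(t)-C_0$. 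That $\underline u$ is a subsolution of the \emph{nonlinear} equation, and not merely of the linearized one, follows from $0\le\underline u\le\delta_0$ together with $f(x,t,s)\ge(\mu-\e_0)s$ on $[0,\delta_0]$; the slab structure, the uniform amplitude bound and the value $\eta_1$ just behind the moving edge are what a moving-box construction provides, its profile being built from the generalized principal eigenfunctions of the linearized operator on growing slabs. I expect this step to be the main obstacle: the existence and the two-sided positive bounds of those eigenfunctions --- i.e.\ the generalized principal eigenvalue theory for coefficients heterogeneous but not periodic in $x$, and the identification of the associated critical speed with $c^*_{\e_0}$. It is precisely here that the uniform continuity of $A$ and $q$ and the H\"older regularity $\eq{hyp-Aextra}$ (giving the a priori Lipschitz estimates quoted from $\cite{PP}$) are used, typically to pass to the limit in translates of approximate eigenfunctions built on large space-time cylinders.

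Granting $\underline u$, the conclusion is a short manipulation of the least mean. By Proposition~3.1 of $\cite{NR1}$, $\inf_t\frac1T\int_t^{t+T}c\le\lm c=\gamma$ for every $T>0$, while $\inf_t\frac1T\int_t^{t+T}\sigma'\to\lm{\sigma'}>\gamma+2\delta$ as $T\to\infty$; hence for every large $T$ there is $t_T\in\R$ with $\int_{t_T}^{t_T+T}c<(\gamma+\delta)T$ and $\int_{t_T}^{t_T+T}\sigma'>(\gamma+2\delta)T$. Translate $\underline u$ in space so that $\sigma(t_T)=\xi(t_T)-R_0$; then the support of $\underline u(\cdot,t_T)$ lies in $\{x\cdot e\le\xi(t_T)-R_0\}$, on which $u\ge1/2\ge\delta_0\ge\underline u$, while $\underline u(\cdot,t_T)=0\le u(\cdot,t_T)$ elsewhere, so $\underline u(\cdot,t_T)\le u(\cdot,t_T)$. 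Since $u$ solves and $\underline u$ subsolves $\eq{princip}$ and both are bounded, the parabolic comparison principle on $\R^N\times[t_T,t_T+T]$ yields $u\ge\underline u$ there. Now set $\e:=\eta_1/2$, fix the associated $R_\e$, and take $T$ large. There is a point $x_0$ with $u(x_0,t_T+T)\ge\underline u(x_0,t_T+T)\ge\eta_1$ and
\[x_0\cdot e\ \ge\ \sigma(t_T+T)-C_0\ =\ \xi(t_T)-R_0-C_0+\int_{t_T}^{t_T+T}\sigma'\ >\ \xi(t_T)-R_0-C_0+(\gamma+2\delta)T.\]
Since $\xi(t_T+T)=\xi(t_T)+\int_{t_T}^{t_T+T}c<\xi(t_T)+(\gamma+\delta)T$, we get $x_0\cdot e-\xi(t_T+T)>\delta T-R_0-C_0\ge R_\e$ once $T$ is large, whence also $u(x_0,t_T+T)\le\e=\eta_1/2<\eta_1$ --- a contradiction. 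This proves the statement, with $c^*$ as above; the passage $\e_0\to0^+$ is harmless since $\gamma<c^*$ was assumed strictly.
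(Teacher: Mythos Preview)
Your proposal has two genuine gaps, both rooted in the fact that \thm{nex} does \emph{not} assume the spatial periodicity \eq{periodic}.

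First, your definition of $c^*$ is not well-posed here. You define $c^*_{\e_0}$ through functions $c^{\e_0}_\lambda$ characterized by the existence of solutions $v=\phi^{\e_0}_\lambda\,e^{-\lambda(x\cdot e-\int_0^t c^{\e_0}_\lambda)}$ of the linearized equation with $\phi^{\e_0}_\lambda$ bounded between two positive constants, and you cite \eq{defc}, \eq{defLambda}, \eq{defc*}. But those are the formulas from the \emph{existence} section, and Lemma~\ref{lem:eta}---the only source of such bounded-above-and-below profiles---relies essentially on periodicity in $x$ (the Harnack inequality is applied on the periodicity cell to get \eq{mineta0}). Without \eq{periodic} there is no reason for such $\phi^{\e_0}_\lambda$ to exist, so your $c^*_{\e_0}$, and hence your $c^*$, are undefined. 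The paper's $c^*$ in Section~\ref{sec:Nonex} is different: it is $-\max_{\lambda>0}\kappa(\lambda)/\lambda$, where $\kappa(\lambda)$ is the generalized principal eigenvalue \eq{pe}, defined via \emph{subsolutions} $\vp$ with $\inf\vp>0$, $\sup\vp<\infty$, $\sup|D\vp|<\infty$. This requires no periodicity.

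Second, and relatedly, your key step---the ``moving-box'' subsolution $\underline u$ supported in a slab of fixed width and advancing with least-mean speed arbitrarily close to $c^*_{\e_0}$---is left entirely open, and you correctly flag it as the main obstacle. In a genuinely non-periodic medium it is far from clear that such a compactly supported travelling subsolution exists; the Dirichlet-type eigenfunction construction you allude to (``generalized principal eigenfunctions on growing slabs'') has no counterpart in the paper, and the a~priori Lipschitz estimates from \cite{PP} do not by themselves produce it. The paper avoids this entirely: its subsolutions (Lemmas~\ref{lem:modulation} and~\ref{lem:subsol}) are \emph{not} compactly supported but have prescribed exponential decay on both sides, built from the $\vp$'s in the definition of $\kappa(\lambda)$. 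These are used to prove first that the \emph{forward average} $\liminf_{t\to+\infty}\frac1t\int_0^tc$ is at least $c^*$ (Proposition~\ref{pro:nex}); the passage from forward average to least mean is then handled not by your time-window argument but by a separate characterization of the least mean via the $\omega$-limit set (Proposition~\ref{reformulation}), followed by a translation-to-limit in the equation---and it is precisely in that limit passage (stability of $\kappa(\lambda)$ under weak limits of coefficients) that \eq{hyp-Aextra} and \cite{PP} are actually invoked.
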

We point out that no spatial-periodicity condition is assumed in the previous 
statement.
In order to prove \thm{nex} we derive a characterization of the least mean
- Proposition \ref{pro:nex} below - that we believe being of independent
interest.
The definition of $c^*$ is given in Section \ref{sec:Nonex}.
Of course, $c^*\leq c_*$ if the hypotheses of both Theorems \ref{thm:ex} and 
\ref{thm:nex} are fulfilled. We do not know if, in general,
$c_*=c^*$, that is, if the speed $c_*$ is minimal, in the 
sense that 
there does not exist any wave with a speed having a smaller least mean. 
When the coefficients are periodic in space and time or only depend on time, we could identify the speed $c_*$ more explicitly (see Section \ref{sec:method} below). 
Indeed, we recover in these frameworks some characterizations of the speeds identified in earlier papers \cite{Nadinptf, NR1, RossiRyzhik}, which were proved 
to be minimal. In the general framework, we leave this question open.

Finally, we leave as an open problem
the case $\lfloor c\rfloor=c_*$, for which we believe that 
generalized transition waves still exist.


\subsection{Optimality of the monostability assumption}\label{sec:optimality}

The assumption \eq{f>0} implies that $0$ and $1$ are respectively unstable
and stable. Let us discuss the meaning and the optimality of this hypothesis,
which might seem strong. 
Actually, as we do not make any additional assumption on the coefficients, we
can consider much more general asymptotic states $p_-=p_-(x,t)<p_+=p_+ (x,t)$
in place of $0$ and $1$,
and try to construct generalized transition waves $v$ connecting $p_-$ to $p_+$.
Indeed, if $p_\pm$ are solutions to \eqref{eq:princip}, with $p_+-p_-$ bounded
and having positive infimum, then the change of variables
$$u(x,t) := \frac{v(x,t) - p_- (x,t)}{p_+ (x,t) - p_- (x,t)}$$ leads
to an equation of the same form, with reaction term
$$\t f(x,t,u):=
\frac{f(x,t,up_++(1-u)p_-)-uf(x,t,p_+)-(1-u)f(x,t,p_-)}{p_+-p_-}.$$
The new equation admits the steady states $0$ and
$1$. Moreover, assuming 
that $u\mapsto f(x,t,u)$ is strictly concave, then
$\t f$ satisfies conditions \eq{f>0}, \eq{hyp-KPP}, the
latter following from the inequality
$$\forall u\in(0,1),\quad u(p_+-p_-)\partial_u f(x,t,p_-)\geq
f(x,t,up_++(1-u)p_-)-f(x,t,p_-).$$
This shows that, somehow, 
the concavity hypothesis of the nonlinearity with respect to $u$ is stronger, up
to some change of variables, than the positivity hypothesis of the nonlinearity.
 
Let us illustrate the above procedure with an explicit example where
$p_-\equiv0$.
Consider the equation 
\begin{equation} \label{eq:v} \partial_t v = \Delta v + \mu (x,t) v -v^2, \quad
x\in\R^N, \quad t\in\R,\end{equation}
with $\mu$ periodic in $x$, bounded and such that $\inf\mu>0$. The later
condition implies that the solution $0$ is linearly unstable (actually, it can
be relaxed by \eqref{eq:mu>0}, see the discussion below).
Then one can check that there is a time-global solution $p=p(x,t)$
which is bounded, has a positive infimum and is periodic in $x$. 
Let $u:= v/p$. This function satisfies
$$\partial_t u = \Delta u + 2\frac{\nabla p}{p} \cdot \nabla u + p(x,t) u
(1-u),$$
which is an equation of the form (\ref{eq:princip})  
for which
\eqref{eq:hyp-KPP}-\eqref{eq:periodic} hold, at least if, for instance, $\mu$ 
is uniformly H\"older-continuous, since then $\nabla p$ is bounded by 
Schauder's parabolic estimates, and $\inf p >0$.

Following this example, one can wonder whether \eq{f>0} is an optimal condition
(up to some change of variables) for the existence of transition waves.
It is well-known that other classes of nonlinearities, such as bistable or
ignition ones, could still give rise to transition waves (see for instance
\cite{BerestyckiHamel}). 
Thus, this question only makes sense if one reduces to the class of 
nonlinearities which are monostable, in a sense.
Let us assume that $f$ satisfies \eq{f(0)}, \eq{f(1)} and that $0$ is linearly
unstable, in the weak sense that \eq{mu>0} holds.
Then, using the properties of the least mean derived
in \cite{NR1}, one can construct arbitrarily small subsolutions $\ul u=\ul 
u(t)$ and thus, as $1$ is a positive solution, 
there exists a minimal solution $p$ of (\ref{eq:princip}) in the class of
bounded solutions with positive infimum. One could then check that 
our proof still works and gives rise to generalized transition waves connecting
$0$ to $p$. Indeed, condition \eq{f>0} only ensures that $p\equiv 1$. 
As a conclusion, the positivity hypothesis \eq{f>0} is not optimal: one could
replace it by \eq{mu>0} but then the generalized transition waves we construct
connect $0$ to the minimal time-global 
solution, which might not be $1$. 

Since for the existence of positive solutions it is sufficient to require
\eqref{eq:mu>0} rather than $\inf\mu>0$, one may argue that, in order to
guarantee that $1$ is the minimal time-global solution with positive infimum,
hypothesis \eq{f>0} could be relaxed by
\Fi{0inst}
\forall u\in (0,1),\quad \lm{\min_{x\in \R^N}f(x,\.,u)}>0.
\Ff
This is not true, as shown by the following example. Let $p\in C^1(\R)$ be a
strictly decreasing function such that $p(\pm\infty)\in(0,1)$.
Let $f$ satisfy $f(t,p(t))=p'(t)$.
It is clear that $f$ can be extended in such a way that \eqref{eq:0inst} holds;
however $p$ is a time-global solution of $\partial_ t u=f(t,u)$ with positive
infimum which is smaller than $1$. 

Finally, if $0$ is linearly stable, in the sense that
\Fi{mu<0}
\um{\sup_{x\in\R^N}\mu(x,\.)}<0
\Ff 
holds, and \eq{hyp-KPP} is satisfied, then there do not exist generalized
transition waves at all, and, more generally, solutions to the Cauchy problem
with bounded initial data converge uniformly to $0$ as $t\to\infty$. Indeed, as
an easy application of the property of the least (and upper) mean \eq{lm},
one can construct a supersolution $\ol u=e^{\sigma(t)-\e t}$, for some
$\sigma\in W^{1,\infty}(\R)$ and $\e>0$. The convergence to $0$ of
bounded solutions then follows from the comparison principle.


\subsection{Description of the method and application to particular cases}
\label{sec:method}

The starting point of the construction of generalized transition waves consists
in finding an explicit expression for the speed.
This is not a trivial task in the case of mixed space-time dependence
considered in this paper. We achieve it by an heuristic argument, that we
now illustrate.

Suppose that $u$ is a generalized transition wave in a direction $e\in
S^{N-1}$. Its tail at large $x\.e$ is close from being a solution 
of the linearized equation around $0$:
\Fi{linearized}
\partial_t u-\Tr(A(x,t)D^2u)+q(x,t)\. Du=\mu(x,t)u.
\Ff
It is natural to expect the tail of $u$ to decays exponentially. Thus, since
the equation is spatially periodic, we look for (the tail of) $u$ under the form
\Fi{form}
u(x,t)=e^{-\lambda x\.e}\eta_\lambda(x,t),\quad\text{ with
$\eta_\lambda$ positive and $l$-periodic in $x$}.
\Ff
Rewriting this expression as
$$u(x,t)={\rm exp}\left(-\lambda\left(x\.e-\frac1\lambda\ln\eta_\lambda(x,t)
\right)\right),$$
shows that the speed of $u$, namely, a function $c$ for which \eq{limits}
holds, should satisfy 
$$\left|\int_0^t
c(s)ds-\frac1\lambda\ln\eta_\lambda(x,t)\right|\leq C,$$
for some $C$ independent of $(x,t)\in\R^N\times\R$. Clearly, this can hold true
only
if the ratio between maximum and minimum of $\eta_\lambda(\.,t)$ is bounded
uniformly on $t$. This property follows from a Harnack-type inequality,
Lemma \ref{lem:eta} below, which is the keystone of our proof and actually the
only step where the periodicity in $x$ really plays a role. 
It would be then natural to define 
$c(t):=\frac1\lambda\frac d{dt}\ln\|\eta_\lambda(\.,t)\|_{L^\infty(\R)}$. The
problem is that we do not know if this function is bounded, since it is not clear whether $\partial_t\eta_\lambda\in L^\infty(\R^{N+1})$ or not. 
We overcome this difficulty by showing that there exists a Lipschitz
continuous function $S_\lambda$ such that 
\Fi{Slambda}
\exists\beta>0,\quad \forall t\in\R,\quad
\left|S_\lambda(t)-\frac1\lambda\ln\|\eta_\lambda(\.,t)\|_{
L^\infty(\R^N)}\right|\leq\beta.
\Ff
We deduce that the function $c$ defined (almost everywhere) by $c:=S_\lambda'$
is bounded and it is an admissible speed for the wave $u$. 
%
%
%
The method described above provides,  for any given
$\lambda>0$, a wave with speed $c=c_\lambda$ for the {\em linearized} equation
which decays with exponential rate $\lambda$.
It is known -for instance in
the case of constant coefficients- that only decaying rates which are
``not too fast'' are admissible for waves of the nonlinear
reaction-diffusion equation. In Section \ref{sec:subsol}, we identify a 
threshold rate $\lambda_*$. In the following section we construct
generalized transition waves for
any $\lambda<\lambda_*$, recovering with the least
mean of their speeds the whole interval $(\lm{c_{\lambda_*}},+\infty)$.
We do not know if the critical speed $c_*:=\lm{c_{\lambda_*}}$ is optimal, nor
if an optimal speed does exist. However, we show below that this is the case if
one applies
the above procedure to some particular classes of heterogeneities already investigated in the
literature.
\\

In the case where the coefficients are periodic in time too, the
class of admissible speeds has been characterized by the first author in
\cite{Nadinptf} (see also \cite{BHNa}).
Following the method described above, we see that an entire solution
of \eq{linearized} in the form \eq{form} is given by
$\eta_\lambda(x,t) = e^{k(\lambda)t} \varphi_\lambda (x,t)$, where
$\big(k(\lambda),
\varphi_\lambda\big)$ are the principal eigenelements of the 
problem\footnote{\
The properties of these eigenelements, which are unique (up to a multiplicative
constant in the case of $\vp_\lambda$) are described in \cite{Nadinptf} for
instance.}
\begin{equation} \label{eq:defklambda}\left\{ \begin{array}{l}
\partial_t\varphi_\lambda-\Tr(AD^2 \varphi_\lambda)
+(q+2\lambda Ae)D\varphi_\lambda-(\mu
+\lambda^2eAe+\lambda q\.e)\varphi_\lambda + k(\lambda)\varphi_\lambda=0 \hbox{
in } \R^N \times \R,\\
\varphi_\lambda>0,\\
\varphi_\lambda \hbox{ is periodic in } t \hbox{ and } x.
                                              \end{array}\right. 
\end{equation}
Actually, the uniqueness up to a multiplicative constant of solutions of 
\eq{linearized} in the form \eq{form}, provided by Lemma \ref{lem:eta}
(proved without assuming the time-periodicity), implies that $\eta_\lambda$ has 
necessarily this form.
Thus, $S_\lambda(t):=\frac{k(\lambda)}\lambda t$ satisfies
\eq{Slambda},
whence the speed of the wave for the linearized equation with decaying rate
$\lambda$ is $c_\lambda\equiv k(\lambda)/\lambda$. Since the $c_\lambda$
are constant (and then they have uniform mean), it turns out that the threshold 
$\lambda_*$ we obtain for the decaying rates coincides with the minimum 
point of
$\lambda\mapsto c_\lambda$ (see Remark \ref{rem:per} below). We eventually  derive the existence
of a generalized transition wave for any speed larger than
$c_*:=\min_{\lambda>0}k(\lambda)/\lambda$, which is exactly the sharp critical
speed for pulsating travelling fronts obtained in \cite{Nadinptf}. 
To sum up, our construction of the minimal speed $c_*$ is optimal in the
space-time periodic framework.
%
%
On the other hand, the definition of $c^*$  in Section
\ref{sec:Nonex} below
could also easily be identified to this speed and thus \thm{nex} implies
that there do not exist generalized transition waves with a speed $c$ such
that
$\lm{c}<\min_{\lambda
>0}k(\lambda)/\lambda$. We therefore recover also the non-existence
result for pulsating travelling fronts. Only the existence of fronts with
critical speed is not recovered.

In the case investigated by the authors in
\cite{NR1}, namely, when $A\equiv I_N$, $q\equiv0$ and $f$ does not depend
on $x$, one can easily check
that $\eta_\lambda(t) = e^{\int_0^t\mu
(s)ds+\lambda^2 t}$. As a function $S_\lambda$ we can simply take
$\frac1\lambda\ln\|\eta_\lambda(\.,t)\|_{
L^\infty(\R^N)}=\frac1\lambda\int_0^t\mu
(s)ds+\lambda t$, which is Lipschitz continuous. Whence $c_\lambda (t)
:= \lambda +
\frac{\mu (t)}{\lambda}$ is a speed of a wave with decaying rate $\lambda$.
In this case the critical decaying rate $\lambda_*$ is equal to
$\sqrt{\lm{\mu}}$ (see again Remark \ref{rem:per}) and
thus we have 
$c_* = 2\sqrt{\lm{\mu}}$. This is the same speed $c_*$ as in \cite{NR1}, which
was proved to be minimal. 

Under the assumptions made by Ryzhik and the second author in 
\cite{RossiRyzhik},
that is, $A$ and $q$ only depend on $x$ (periodically) and $f$ only depend
on $(t,u)$, 
the speeds $c_*$ derived in the present paper and in \cite{RossiRyzhik}
coincide, and thus it is minimal, in
the sense that there do not exist any generalized transition wave with a lower
speed. 

When $A\equiv I_N$ and $q$, $f$ are periodic in $x$ and uniquely ergodic in $t$,
then one can
prove that the same holds true for the function $\partial_t
\eta_\lambda/\eta_\lambda$ by 
uniqueness, and thus $\alpha \lm{c_\alpha}$
could be identified with the Lyapounov exponent
$\lambda (\alpha,\xi)$ used by Shen in \cite{Shenueper}, where $\xi$ is the
direction of 
propagation. We thus recover the same speed $c_*$ as in \cite{Shenueper} in
this
framework, which was not proved to be minimal since Shen did not 
investigate the nonexistence of transition waves with lower speed in
\cite{Shenueper}. 
Note that this identification is not completely obvious. However, as the
formalism of the present paper and \cite{Shenueper} are very different, we leave these computations to the reader. 
\\

Lastly, let consider the following example, where one could indeed construct
directly the generalized transition waves:
\begin{equation} \label{eq:ex}\partial_t u -\partial_{xx} u - q(t) \partial_x u =\mu_0 u (1-u),\end{equation}
with $q$ bounded and uniformly continuous and $\mu_0>0$. This equation 
satisfies assumptions \eq{hyp-A}-\eq{mu>0}. The change of variables 
$v(x,t):= u(x-\int_0^t q,t)$ leads to the classical homogeneous Fisher-KPP equation $\partial_t v -\partial_{xx} v = \mu_0 v(1-v)$. 
This equation admits travelling wave solutions of the form $v(x,t) =\phi_c
(x-ct)$, with $\phi_c (-\infty)=1$ and $\phi_c (+\infty)=0$, for all $c\geq
2\sqrt{\mu_0}$.  
Hence, equation (\ref{eq:ex}) admits generalized transition waves $u(x,t) =
\phi_c (x -ct +\int_0^t q,t)$ of speed $c-q(t)$ if and only if $c\geq
2\sqrt{\mu_0}$. That is, the set of least mean of admissible speeds is
$[2\sqrt{\mu_0}-\um{q},+\infty)$.
Computing $c_*$ in this case, one easily gets 
$$\eta_\lambda =\eta_\lambda (t) = e^{\lambda^2 t -\lambda \int_0^t q(s)ds+\mu_0 t}, \quad 
c_\lambda (t) = \lambda- q(t) +\mu_0/\lambda \quad \hbox{ and } \quad c_* =
2\sqrt{\mu_0}-\um{q}.$$ 
One could check that $c^*$ coincides with this value too, meaning that
Theorems \ref{thm:ex} and \ref{thm:nex} fully characterize the possible least means for
admissible speeds, except for the critical one.



\section{Existence result}                
\label{sec:ex}

Throughout this section, we fix $e\in S^{N-1}$ and we assume that
conditions \eq{hyp-A}-\eq{periodic} hold. 
Actually, condition \eq{f>0} could be weakened by \eq{mu>0}, except for the 
arguments in the very last part of the proof in 
Section \ref{sec:ccl}. As already mentioned in Section \ref{sec:optimality},
these arguments could be easily adapted to the case where \eq{f>0} is replaced 
by \eq{mu>0}, leading to transition waves connecting $0$ 
to the minimal solution with positive infimum. 



\subsection{Solving the linearized equation}

We focus on solutions with prescribed spatial exponential decay.

\begin{lemma}\label{lem:eta}
For all $\lambda>0$, the equation \eq{linearized}
admits a time-global solution of the form \eq{form}. Moreover, $\eta_\lambda$
is unique up to a multiplicative constant and satisfies, for all $t\in\R$,
$T\geq0$, 
\Fi{maxeta}
\max_{x\in\R^N}\eta_\lambda(x,t+T)\leq\max_{x\in\R^N}\eta_\lambda(x,t)\,
\exp\left((\ol\alpha\lambda+\sup_{\R^{N+1}}|q|)\lambda T+
\int_t^{t+T}\max_{x\in\R^N}\mu(x,s)ds\right),
\Ff
\Fi{mineta}
\min_{x\in\R^N}\eta_\lambda(x,t+T)\geq C\max_{x\in\R^N}\eta_\lambda(x,t)\,
\exp\left((\ul\alpha\lambda-\sup_{\R^{N+1}}|q|)\lambda T+
\int_t^{t+T}\min_{x\in\R^N}\mu(x,s)ds\right),
\Ff
with $C>0$ only depending on a constant bounding $|\lambda|$, $|l|$,
$\ul\alpha^{-1}$, $\ol\alpha$, $N$ and the $L^\infty$ norms of $\mu$ and $q$.
\end{lemma}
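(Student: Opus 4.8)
The plan is to construct $\eta_\lambda$ as the large-time limit of solutions to the periodic-in-space Cauchy problem, then establish the two-sided exponential bounds by a combination of the maximum principle and a parabolic Harnack inequality, and finally deduce uniqueness from these bounds.

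\textbf{Existence.} Substituting $u(x,t)=e^{-\lambda x\cdot e}\eta(x,t)$ into \eq{linearized}, one finds that $\eta$ must solve a linear parabolic equation on $\R^N\times\R$ whose coefficients are $l$-periodic in $x$ (it is essentially the equation appearing in \eq{defklambda} but without the time-periodicity requirement and without the eigenvalue term). Working on the torus $\mathbb{T}^N:=\R^N/(l_1\Z\times\cdots\times l_N\Z)$, I would solve, for each $n\in\N$, the Cauchy problem on $(-n,\infty)$ with initial datum $\eta\equiv 1$ at time $-n$, call the solution $\eta^{(n)}$, and normalize by $\|\eta^{(n)}(\cdot,0)\|_{L^\infty(\mathbb{T}^N)}$. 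Since the coefficients are bounded, parabolic estimates (interior $L^p$ and Schauder, uniform on the compact torus) give local compactness, so a subsequence converges locally uniformly to a positive solution $\eta_\lambda$ of the $\eta$-equation that is $l$-periodic in $x$ and defined for all $t\in\R$; positivity of the limit is guaranteed by the lower bound \eq{mineta}, which I prove below and which survives passage to the limit. This produces a time-global solution of \eq{linearized} of the form \eq{form}.

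\textbf{The two-sided bounds.} For the upper bound \eq{maxeta}, let $M(t):=\max_{x}\eta_\lambda(x,t)$ and test the $\eta$-equation against the spatially constant supersolution $\ol\eta(t):=M(t_0)\exp\big((\ol\alpha\lambda+\sup|q|)\lambda(t-t_0)+\int_{t_0}^t\max_x\mu(x,s)\,ds\big)$: since the first- and second-order terms vanish on a constant and the zeroth-order coefficient of the $\eta$-equation is bounded above by $(\ol\alpha\lambda+\sup|q|)\lambda+\max_x\mu(x,\cdot)$, the function $\ol\eta$ is a supersolution, it dominates $\eta_\lambda$ at $t=t_0$, and the comparison principle (valid on the torus, no boundary) yields \eq{maxeta} with $t_0=t$, $T=t-t_0$. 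The lower bound \eq{mineta} is the genuinely substantive point. Here I would first use the analogous constant \emph{subsolution} argument to get $\min_x\eta_\lambda(x,t+T)\geq \min_x\eta_\lambda(x,t)\exp\big((\ul\alpha\lambda-\sup|q|)\lambda T+\int_t^{t+T}\min_x\mu(x,s)\,ds\big)$, which replaces $\max_x\eta_\lambda(x,t)$ by $\min_x\eta_\lambda(x,t)$; then I would invoke the parabolic Harnack inequality on the torus, applied to the nonnegative solution $\eta_\lambda$ of a uniformly parabolic equation with bounded coefficients, to compare $\max_x\eta_\lambda(x,t)$ with $\min_x\eta_\lambda(x,t+1)$ (say), picking up a constant $C$ depending only on the ellipticity bounds, the dimension $N$, the period $|l|$, $\lambda$, and $\|\mu\|_\infty,\|q\|_\infty$. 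Chaining these two estimates — one step of Harnack to go from $\max$ at time $t$ to $\min$ at time $t+1$, then the subsolution bound over the remaining interval of length $T-1$ (and handling $T<1$ directly by a single Harnack step on a short cylinder) — gives \eq{mineta}. The main obstacle is precisely bookkeeping the dependence of the Harnack constant so that it only involves the quantities listed in the statement, uniformly in the time-translate; this is where the $x$-periodicity is essential, since it turns the Harnack inequality on the whole space (where the constant would otherwise depend on the size of the domain) into one on a fixed compact manifold.

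\textbf{Uniqueness up to a multiplicative constant.} Suppose $\eta,\t\eta$ are two positive $l$-periodic-in-$x$ solutions of the $\eta$-equation. The bounds \eq{maxeta}--\eq{mineta} (applied to each) show that the ratio $\rho(x,t):=\eta(x,t)/\t\eta(x,t)$ is bounded above and below by positive constants on $\R^N\times\R$; set $\kappa:=\sup\rho<\infty$. Then $\kappa\t\eta-\eta\geq 0$ solves the same linear equation and vanishes somewhere in the limit (the sup is attained along a sequence $(x_n,t_n)$; by periodicity translate the $x_n$ back into a fundamental domain and by the a priori estimates pass to a limit, getting a nonnegative solution of the equation touching $0$). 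The strong maximum principle (together with its boundary/parabolic version to push the zero backward in time, and periodicity to spread it in $x$) forces $\kappa\t\eta\equiv\eta$ on $\R^N\times(-\infty,t_\infty]$, hence on all of $\R^N\times\R$ by the backward uniqueness / unique continuation built into linear parabolic equations, so $\eta\equiv\kappa\t\eta$. (If one prefers to avoid backward arguments, one can instead run the sup-ratio argument on each half-line and use that a bounded ratio solving the equation with an interior zero must be constant.) This completes the proof.
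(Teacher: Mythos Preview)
Your existence argument and derivation of \eq{maxeta}--\eq{mineta} are essentially the paper's own: Cauchy problems on $(-n,\infty)$ with normalisation at $t=0$, constant super/subsolutions for the exponential envelopes, and the parabolic Harnack inequality on the torus to pass from $\max$ to $\min$.

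The uniqueness sketch, however, has two genuine gaps. First, \eq{maxeta}--\eq{mineta} alone do \emph{not} imply that $\rho=\eta/\t\eta$ is globally bounded: those bounds involve different exponential rates ($\ol\alpha$ versus $\ul\alpha$, $\max_x\mu$ versus $\min_x\mu$), and the resulting quotient estimate blows up as $|T|\to\infty$. The paper obtains the bound on $\rho$ by a separate comparison: if $\eta\leq h\t\eta$ at $t=0$, then for each $t<0$ one must have $\min_x\eta(\cdot,t)\leq h\max_x\t\eta(\cdot,t)$ (else the strong maximum principle would force $\eta>h\t\eta$ at $t=0$), and then \eq{mineta} with $T=0$ upgrades this to $\max_x\eta\leq K\min_x\t\eta$. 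Second, and more importantly, your strong-maximum-principle step only works if $\sup\rho$ is actually attained at a finite time. If the maximising $t_n$ are unbounded, translating in $t$ and passing to a limit yields functions solving a \emph{different} equation (the coefficients are not periodic in $t$), and the identity obtained for the limits says nothing directly about the original $\eta,\t\eta$. The paper's remedy is to take $k:=\limsup_{t\to-\infty}\max_x\rho$ rather than the global $\sup$; after proving $\hat\eta^1\equiv k\hat\eta^2$ for the translated limits, one recovers $(k-\e)\t\eta<\eta<(k+\e)\t\eta$ at time $t_n$, propagates this \emph{forward} by comparison to all $t\geq t_n$, and lets $t_n\to-\infty$, $\e\to0^+$. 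The direction $t_n\to-\infty$ is exactly what makes forward propagation cover all of $\R$; with your global $\sup$ this is not guaranteed, and no backward-uniqueness statement in this generality is available to rescue the argument.
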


The function $(x,t) \mapsto e^{-\lambda x\.e}\eta_\lambda(x,t)$ is a solution of
the linearization of (\ref{eq:princip}) near the unstable equilibrium.  
We will show in the next section that it is somehow a transition wave solution
of the linearized equation, in the sense that it moves in the direction $e$ with
a certain speed. Due to hypothesis \eq{hyp-KPP}, we could use it as 
a supersolution of the nonlinear equation. Then, in Section \ref{sec:subsol}, in
order to construct an appropriate subsolution, we will need to restrict to
exponents $\lambda$ less than some threshold $\lambda_*$. We will eventually
derive the existence of transition waves in Section \ref{sec:ccl}.

As mentioned in Section \ref{sec:method}, Lemma \ref{lem:eta} is the only
point where the spatial periodicity hypothesis \eq{periodic} is used. If
the coefficients were depending in a general way on both
$x$ and $t$ and if one was able to construct a solution $\eta_\lambda$ of 
equation \eq{eta} for which there exists $C>0$ such that for all $T>0$,
$(x,t)\in\R^{N+1}$, one has:
\begin{equation} \label{eq:Harnackgen}\frac{1}{C} \|\eta_\lambda
(\cdot,t)\|_{L^\infty (\R^N)} e^{-CT} 
\leq \eta_\lambda (x,t+T)\leq C \|\eta_\lambda (\cdot,t)\|_{L^\infty (\R^N)}
e^{CT},\end{equation}
then the forthcoming other steps of the proof still apply and
it is possible to construct a generalized transition wave solution of the
nonlinear equation (\ref{eq:princip}). 
We describe this extension in Section \ref{sec:gen} below. 
It would be very useful to determine optimal conditions on the coefficients
enabling the derivation of a global Harnack-type inequality
(\ref{eq:Harnackgen})
for the linearized equation. We leave this question as an open problem. 
%

\begin{proof}[Proof of Lemma \ref{lem:eta}.]
The problem for $\eta_\lambda$ is
\Fi{eta}
\partial_t\eta_\lambda=\Tr(AD^2 \eta_\lambda)
-(q+2\lambda Ae)\.D\eta_\lambda+(\mu
+\lambda^2eAe+\lambda q\.e)\eta_\lambda,\quad x\in\R^N,\ t\in\R.
\Ff
We find a positive, $l$-periodic solution to \eq{eta} as the locally
uniform limit of (a subsequence of) solutions $\eta^n$ of the problem in
$\R^N\times(-n,+\infty)$,  
with initial datum $\eta^n(-n,\.)\equiv m_n$, where $m_n$ is a positive 
constant chosen in such a way that, say, $\sup_{x\in\R^N}\eta^n(0,x)=1$.

Let us show that any $l$-periodic solution $\eta_\lambda$ to \eq{eta} satisfies 
\eq{maxeta} and \eq{mineta}.
For given $t_0\in\R$, the function
$$\max_{x\in\R^N}\eta_\lambda(x,t_0)\,\exp\left((\ol\alpha\lambda^2+\sup_
{\R^{N+1}}|q|\lambda) (t-t_0)+
\int_{t_0}^t\max_{x\in\R^N}\mu(x,s)ds\right)$$
is a supersolution of \eq{eta} larger than $\eta_\lambda$ at time $t_0$. Since
$\eta_\lambda$ is bounded, we can apply the parabolic comparison principle and
derive \eq{maxeta}.
Let $\mc{C}$ denote the periodicity cell $\prod_{j=1}^N[0,l_j]$.
By parabolic Harnack's inequality (see, e.g., Corollary 7.42
in \cite{Lie}), we have that
\Fi{Harnack}
\forall t\in\R,\quad\max_{x\in\mc{C}}\eta_\lambda(x,t-1)\leq
\t C\min_{x\in\mc{C}}\eta_\lambda(x,t),
\Ff
for some $\t C>0$ depending on a constant bounding $|\lambda|$, $|l|$,
$\ul\alpha^{-1}$, $\ol\alpha$, $N$ and the $L^\infty$ norms of $\mu$ and $q$,
and
not on $t$.
On the other hand, the comparison principle yields, for $T\geq0$,
$$\min_{x\in\R^N}\eta_\lambda(x,t+T)\geq
\min_{x\in\R^N}\eta_\lambda(x,t)\,
\exp\left((\ul\alpha\lambda^2-\sup_{\R^{N+1}}|q|\lambda) T+
\int_t^{t+T}\min_{x\in\R^N}\mu(x,s)ds\right).$$
Combining this inequality with \eq{Harnack} we eventually derive
$$\min_{x\in\R^N}\eta_\lambda(x,t+T) \geq 
\t C^{-1}\max_{x\in\R^N}\eta_\lambda(x,t-1)\,
\exp\left((\ul\alpha\lambda^2-\sup_{\R^{N+1}}|q|\lambda) T+
\int_t^{t+T}\min_{x\in\R^N}\mu(x,s)ds\right),$$
from which \eq{mineta} follows by \eq{maxeta}. 

It remains to prove the uniqueness result.
Assume that \eq{linearized} admits two positive, $l$-periodic in
$x$ solutions $\eta^1$, $\eta^2$. As shown before, we know that they both
satisfy \eq{maxeta} and \eq{mineta}.
We first claim that there exists $K>1$ such
that
\Fi{trapped}
\forall t\in\R,\ x\in\R^N,\quad K^{-1}\eta^2(x,t)\leq\eta^1(x,t)\leq
K\eta^2(x,t).
\Ff
Let $h>0$ be such that $\eta^1\leq
h\eta^2$ at $t=0$. It follows that, for
$t\leq0$,
$\min_{x\in\R^N}\eta^1(x,t)\leq h\max_{x\in\R^N}\eta^2(x,t)$,
because otherwise the parabolic strong maximum principle would imply
$\eta^1>h\eta^2$ at $t=0$. Whence, applying \eq{mineta} with $T=0$
to both $\eta^1$ and $\eta^2$, we find a positive
constant $K$ such that
$$\forall t<0,\quad\max_{x\in\R^N}\eta^1(x,t)\leq
K\min_{x\in\R^N}\eta^2(x,t).$$
This proves the second inequality in \eq{trapped}, for $t<0$, whence for
all $t\in\R$ by the maximum principle.
The first inequality, with a possibly larger $K$, is obtained by exchanging the
roles of $\eta^1$ and $\eta^2$. Now, call
$$k:=\limsup_{t\to-\infty}\max_{x\in\R^N}
\frac{\eta^1(x,t)}{\eta^2(x,t)}.$$
We know from \eq{trapped} that $k\in[K^{-1},K]$. Consider a sequence
$\seq{t}$ such that 
$$\limn t_n=-\infty,\qquad\limn\max_{x\in\R^N}
\frac{\eta^1(x,t_n)}{\eta^2(x,t_n)}=k.$$
Define the sequences of functions $\seq{\eta^1}$, $\seq{\eta^2}$ as follows: 
$$\forall i\in\{1,2\},\ n\in\N,\quad\eta^i_n(x,t):=\frac{\eta^i(x,t+t_n)}
{\max_{y\in\R^N}\eta^1(y,t_n)}.$$
We deduce from \eq{maxeta} and \eq{mineta} that the $\seq{\eta^1}$
are uniformly bounded from above and uniformly bounded from below away from $0$
in, say, $\R^N\times[-2,2]$. The same is true for $\seq{\eta^2}$ by
\eq{trapped}. 
Thus, by parabolic estimates and periodicity
in $x$, the sequences $(\eta_n^i)_n$, $(\partial_t \eta_n^i)_n$, 
$(D\eta_n^i)_n$ and $(D^2\eta_n^i)_n$ converge, up to subsequences, in 
$L^p_{loc}(\R^{N+1})$.
Morrey's inequality yields that the sequences $(\eta_n^1)_n$ and $(\eta_n^2)_n$ 
converge locally uniformly to some functions $\t \eta^1$ and $\t \eta^2$ 
respectively. 

Call $A_n:=A (\cdot,\cdot+t_n)$, $q_n:=q (\cdot,\cdot +t_n)$,
$\mu_n:=\mu (\cdot,\cdot+t_n)$.
As $A$ and $q$ are uniformly continuous, $(A_n)_n$ and $(q_n)_n$ converge (up 
to subsequences) to
some functions $\t A$ and $\t q$ in $L^\infty_{loc} (\R^{N+1})$, whereas 
$(\mu_n)_n$ converges to some $\t\mu$ in the $L^\infty(\R^{N+1})$ 
weak-$\star$ topology.
Hence, passing to the weak $L^p_{loc}(\R^{N+1})$ limit in the equations 
satisfied by the $\seq{\eta^i}$, we get
$$
\partial_t\t\eta^i=\Tr(\t A D^2 \t\eta^i)
-(\t q +2\lambda \t A e)D\t \eta^i+(\t\mu
+\lambda^2e \t A e+\lambda \t q\.e)\t\eta^i,\quad x\in\R^N,\ t\in\R.
$$ 
Clearly, these equations hold almost everywhere because all the terms 
are measurable functions. That is, the $\t \eta^i$ are
strong solutions. Moreover,
$$\t\eta^1\leq k\t\eta^2,\qquad \max_{x\in\R^N}
\frac{\t\eta^1(x,0)}{\t\eta^2(x,0)}=k.$$
The strong maximum principle then yields $\t\eta^1\equiv k\t\eta^2$.
As a consequence, for any $\e>0$, we can find $n_\e\in\N$ such that, for $n\geq
n_\e$, $(k-\e)\eta^2_n<\eta^1_n<(k+\e)\eta^2_n$ at $t=0$. These inequalities
hold for all $t\geq0$, again by the maximum principle. Reverting to the
original functions we obtain
$(k-\e)\eta^2<\eta^1<(k+\e)\eta^2$ for $t\geq t_n$ and $n\geq n_\e$, from
which, letting $n\to\infty$ and $\e\to0^+$, we eventually infer that
$\eta^1\equiv k\eta^2$ for all $t\in\R$.
\end{proof}

In the particular case $T=0$, the inequality \eqref{eq:mineta} reads
\Fi{mineta0}
\min_{x\in\R^N}\eta_\lambda(x,t)\geq C\max_{x\in\R^N}\eta_\lambda(x,t).
\Ff
Notice that, in contrast with standard parabolic Harnack's inequality, the 
two sides are evaluated at the same time. This particular instance 
of \eqref{eq:mineta} will be sometimes used in the sequel.

Until the end of the proof of \thm{ex}, for $\lambda>0$, 
$\eta_\lambda$ stands for the (unique up to a multiplicative constant) function
given by Lemma \ref{lem:eta}.


\subsection{The speeds of the waves}

\begin{lemma}\label{lem:Slambda}
There is a uniformly Lipschitz-continuous function $S_\lambda:\R\to\R$ 
satisfying~\eq{Slambda}.
\end{lemma}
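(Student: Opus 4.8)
The plan is to define $S_\lambda$ essentially as a smoothed version of the obviously relevant but a priori only measurable function $t\mapsto\frac1\lambda\ln\|\eta_\lambda(\cdot,t)\|_{L^\infty(\R^N)}$, whose defect is that its derivative involves $\partial_t\eta_\lambda$, which we do not control in $L^\infty$. Write $M(t):=\max_{x\in\R^N}\eta_\lambda(x,t)$ (the max is attained by periodicity and continuity). First I would record the two-sided control on the \emph{growth} of $M$ that is already contained in Lemma \ref{lem:eta}: applying \eqref{eq:maxeta} and \eqref{eq:mineta} (the latter combined with \eqref{eq:mineta0} to compare $\min$ and $\max$ at the same time) gives constants $a<b$, depending only on $\lambda$, $\ol\alpha$, $\ul\alpha$, $N$, $|l|$ and the $L^\infty$ norms of $\mu,q$, such that
\begin{equation}\label{eq:Mgrowth}
a\,(T-1)-\beta_0\le \ln M(t+T)-\ln M(t)\le b\,T+\beta_0,\qquad\forall t\in\R,\ T\ge0,
\end{equation}
for some fixed $\beta_0>0$ (the $-1$ and the additive $\beta_0$ absorbing the Harnack constant $\t C$ and the step-$1$ time shift). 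In other words $\psi(t):=\frac1\lambda\ln M(t)$ has increments that are trapped between two linear functions of the time increment, up to a bounded additive error: $\psi(t+T)-\psi(t)\in[\tfrac{a}{\lambda}(T-1)-\tfrac{\beta_0}{\lambda},\,\tfrac{b}{\lambda}T+\tfrac{\beta_0}{\lambda}]$.

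Next I would produce the Lipschitz function by a standard regularization/selection argument. One clean option is to mollify: set $S_\lambda:=\psi*\rho$ with $\rho$ a smooth probability kernel supported in $[-1,1]$. Then $S_\lambda$ is smooth, and $|S_\lambda(t)-\psi(t)|\le\sup_{|s|\le1}|\psi(t+s)-\psi(t)|$, which is bounded by \eqref{eq:Mgrowth} (taking $T=|s|\le1$ on one side, and on the other side using the lower bound written for the shift from $t+s$ to $t$ when $s<0$, resp. $t$ to $t+s$ when $s>0$); this yields \eqref{eq:Slambda} with $\beta$ depending only on the constants above. For the Lipschitz bound, $S_\lambda'(t)=\int\psi(t-s)\rho'(s)ds=\int(\psi(t-s)-\psi(t))\rho'(s)ds$, and $|\psi(t-s)-\psi(t)|\le C_0$ for $|s|\le1$ again by \eqref{eq:Mgrowth}, so $|S_\lambda'|\le C_0\|\rho'\|_{L^1}$, uniformly in $t$. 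Alternatively, and perhaps more in the spirit of ``selecting'' a speed, one can take $S_\lambda$ piecewise linear: fix the grid $t_n=n$, set $S_\lambda(n):=\psi(n)$ and interpolate linearly; then on each $[n,n+1]$ the slope is $\psi(n+1)-\psi(n)$, which lies in a fixed bounded interval by \eqref{eq:Mgrowth}, and $|S_\lambda(t)-\psi(t)|$ on $[n,n+1]$ is controlled by $\max(|\psi(t)-\psi(n)|,|\psi(t)-\psi(n+1)|)$ plus the oscillation of the interpolant, all bounded by \eqref{eq:Mgrowth}. Either route gives a uniformly Lipschitz $S_\lambda$ satisfying \eqref{eq:Slambda}.

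The only genuine subtlety — and the step I would be most careful about — is making sure the mollification (or interpolation) error really is bounded \emph{two-sidedly and uniformly in $t$}: the upper bound in \eqref{eq:maxeta} is immediate, but the matching lower bound requires the lower Harnack-type estimate \eqref{eq:mineta}, and one must check that the ``$T\ge0$'' restriction there is harmless, i.e.\ that comparing $M(t)$ to $M(t')$ for $t'$ close to $t$ on \emph{both} sides is covered by applying \eqref{eq:Mgrowth} with the roles of the two times swapped. Once that symmetric bound $|\psi(t)-\psi(t')|\le C_0$ for $|t-t'|\le1$ is in hand, everything else is routine. Note also that the construction uses only Lemma \ref{lem:eta}, so it makes no use of time-periodicity; this is consistent with the remark in Section \ref{sec:method} that in the space-time periodic case one may instead simply take $S_\lambda(t)=\tfrac{k(\lambda)}\lambda t$.
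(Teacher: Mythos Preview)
Your proposal is correct, and your second option --- piecewise-linear interpolation of $\psi(t)=\tfrac1\lambda\ln\|\eta_\lambda(\cdot,t)\|_{L^\infty}$ at integer nodes --- is precisely the paper's proof; the paper records the two-sided increment bound on $\ln M$ from \eqref{eq:maxeta}--\eqref{eq:mineta}, sets $S_\lambda(n)=\psi(n)$ with affine interpolation on $[n,n+1]$, and reads off the uniform Lipschitz constant and the uniform bound $|S_\lambda-\psi|$. Your mollification alternative is equally valid and slightly more elegant (it yields a $C^\infty$ speed rather than a piecewise-constant one), but it is not the route the paper takes; there is no substantive difference in what the two constructions buy, since only Lipschitz continuity and \eqref{eq:Slambda} are used downstream.
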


\begin{proof}
Properties \eq{maxeta}-\eq{mineta} yield the existence of a constant $\beta>0$
such that
$$\forall t\in\R,\ T\geq0,\quad
\big|\ln\|\eta_\lambda (\cdot,t+T)\|_{L^\infty (\R^N)}  -\ln\|\eta_\lambda
(\cdot,t)\|_{L^\infty (\R^N)}\big| \leq \beta(1+\lambda^2)T.$$
For all $n\in\mathbb{N}$, we define $S_\lambda$ on $[n,n+1]$ as the affine
function satisfying 
$$S_\lambda(n)=\frac{1}{\lambda}\ln\|\eta_\lambda 
(\cdot,n)\|_{L^\infty(\R^N)},\qquad
S_\lambda(n+1)=\frac{1}{\lambda}\ln\|\eta_\lambda 
(\cdot,n+1)\|_{L^\infty (\R^N)}.$$
Then for all $t\in (n,n+1)$, 
$$|S_\lambda '(t)| = \left|\frac{1}{\lambda}\ln\|\eta_\lambda
(\cdot,n+1)\|_{L^\infty (\R^N)}-\frac{1}{\lambda}\ln\|\eta_\lambda
(\cdot,n)\|_{L^\infty (\R^N)}\right| \leq \beta\frac{1+\lambda^2}\lambda.$$
Hence, $S_\lambda$ is uniformly Lipschitz-continuous over $\R$. 
Moreover, if $t\in [n,n+1]$, one has 
\[\begin{split}
\left|S_\lambda (t) - \frac{1}{\lambda}\ln\|\eta_\lambda (\cdot,t)\|_{L^\infty
(\R^N)}\right| &\leq 
\big|S_\lambda (t) -S_\lambda (n)\big|
+\frac{1}{\lambda}\big| \ln\|\eta_\lambda (\cdot,t)\|_{L^\infty
(\R^N)}-\ln\|\eta_\lambda (\cdot,n)\|_{L^\infty (\R^N)}\big|\\
&\leq 2\beta\frac{1+\lambda^2}\lambda.
\end{split}\] 
Hence, $t\mapsto S_\lambda (t) - \frac{1}{\lambda}\ln\|\eta_\lambda
(\cdot,t)\|_{L^\infty (\R^N)}$ is uniformly bounded over $\R$.  
\end{proof}

Owing to Lemma \ref{lem:Slambda}, the function $c_\lambda$ defined for (a.e.)
$t\in\R$ by
\begin{equation}\label{eq:defc} 
c_\lambda(t):= S_\lambda'(t),
\end{equation}
belongs to $L^\infty (\R)$. We will use it as a possible speed for a transition
wave to be constructed. 
\\

Let us investigate the properties of the least mean of the 
$(c_\lambda)_{\lambda>0}$. 
It follows from \eq{Slambda} that
\Fi{lmc}\lm{c_\lambda}=\frac1\lambda
\lim_{T\to+\infty}\inf_{t\in\R } \frac1T
\ln\frac{\|\eta_\lambda(\.,t+T)\|_{L^\infty(\R^N)}}{\|\eta_\lambda(\.,t)\|_{
L^\infty(\R^N)}}.
\Ff
Hence, by \eq{maxeta} and \eq{mineta}, we derive
\Fi{lmbounds}
\ul\alpha\lambda-\sup_{\R^{N+1}}|q|+\frac1\lambda\left\lfloor\min_{x\in\R^N}
\mu(x,
\.)\right\rfloor
\leq\left\lfloor{c_\lambda}\right\rfloor\leq\ol\alpha\lambda+\sup_{\R^{N+1}}|q|+
\frac1\lambda\left\lfloor\max_{x\in\R^N}\mu(x,\.)\right\rfloor.
\Ff
Analogous bounds hold for the upper mean:
\Fi{umbounds}
\ul\alpha\lambda-\sup_{\R^{N+1}}|q|+\frac1\lambda\um{\min_{x\in\R^N}\mu(x,\.)}
\leq\um{c_\lambda}\leq
\ol\alpha\lambda+\sup_{\R^{N+1}}|q|+\frac1\lambda\um{\max_{x\in\R^N}\mu(x,\.)}.
\Ff


We have
seen in Section \ref{sec:method} that, when the coefficients are periodic in
$t$, one can take $S_\lambda(t):=(k(\lambda)/\lambda) t$, whence 
$c_\lambda\equiv k(\lambda)/\lambda$. It follows that 
$\lambda c_\lambda=k(\lambda)$, and we know from the arguments in the 
proof of Proposition 5.7 part (iii) in \cite{BerestyckiHamel} that the function 
$k$ is convex.
In the general heterogeneous framework considered in the present paper, we  
use the same arguments as in \cite{BerestyckiHamel} to derive the Lipschitz
continuity of the function $\lambda\mapsto\lambda\lm{c_\lambda}$. If the 
functions $c_\lambda$ admit a uniform mean
then these arguments actually imply that $\lambda\mapsto\lambda\lm{c_\lambda}$ 
is convex, but we do not know if this is true in general.

\begin{lemma}\label{lem:c_l}
The functions $\lambda\mapsto\left\lfloor{c_\lambda}\right\rfloor$ and 
$\lambda\mapsto\left\lceil{c_\lambda}\right\rceil$ are
locally uniformly Lipschitz continuous on $(0,+\infty)$.
\end{lemma}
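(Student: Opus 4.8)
The plan is to adapt the standard convexity-type argument for the principal eigenvalue $k(\lambda)$ (as in Proposition 5.7 of \cite{BerestyckiHamel}) to the present setting, where no time-periodicity is available and the role of $k(\lambda)$ is played by the quantity $\lambda\lm{c_\lambda}$. The key is the representation \eqref{eq:lmc}, which expresses $\lambda\lm{c_\lambda}$ as a ``$\liminf$ of log-growth rates'' of $\|\eta_\lambda(\cdot,t)\|_{L^\infty}$. First I would fix a compact interval $[\lambda_0,\lambda_1]\subset(0,+\infty)$ and seek to bound $|\lambda\lm{c_\lambda}-\lambda'\lm{c_{\lambda'}}|$ linearly in $|\lambda-\lambda'|$ for $\lambda,\lambda'$ in this interval; since $\lambda$ itself is uniformly bounded above and below there, this will immediately give the Lipschitz bound for $\lambda\mapsto\lm{c_\lambda}$ (and, verbatim, for $\lambda\mapsto\um{c_\lambda}$, replacing every $\liminf$/$\inf_t$ by $\limsup$/$\sup_t$).

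The mechanism I would use is the following comparison trick. Given $\lambda$ and $\lambda'$, set $\theta:=\lambda'-\lambda$ and compare $e^{-\lambda x\cdot e}\eta_\lambda$ with a suitable multiple of $e^{-\lambda' x\cdot e}\eta_{\lambda'}$ times an explicit exponential-in-$x$-and-$t$ correction: the point is that $e^{-\theta x\cdot e}$ is not periodic, but on a fixed periodicity cell it oscillates only by a bounded factor, so one can absorb it. Concretely, writing the equation \eqref{eq:eta} for $\eta_\lambda$ and for $\eta_{\lambda'}$ and subtracting, the difference in zeroth-order coefficients is $(\lambda'^2-\lambda^2)eAe+(\lambda'-\lambda)q\cdot e$, which is bounded by $C|\lambda-\lambda'|$ on $[\lambda_0,\lambda_1]$ uniformly in $(x,t)$ (using \eqref{eq:hyp-A}, \eqref{eq:hyp-q}); the first-order coefficient differs by $2(\lambda'-\lambda)Ae$, also $O(|\lambda-\lambda'|)$. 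Hence $\eta_{\lambda'}\, e^{\pm C|\lambda-\lambda'|(t-t_0)}$ (with an appropriate multiplicative constant coming from \eqref{eq:mineta0} to fix the ordering at the initial time $t_0$, and a bounded-in-$x$ factor to handle the drift-coefficient discrepancy) is a super/sub-solution of the equation for $\eta_\lambda$ on $t\ge t_0$. Applying the parabolic comparison principle and then taking $\|\cdot\|_{L^\infty(\R^N)}$ and logarithms yields
\[
\Big|\ln\|\eta_\lambda(\cdot,t+T)\|_{L^\infty}-\ln\|\eta_{\lambda'}(\cdot,t+T)\|_{L^\infty}\Big|
\le \Big|\ln\|\eta_\lambda(\cdot,t)\|_{L^\infty}-\ln\|\eta_{\lambda'}(\cdot,t)\|_{L^\infty}\Big|+C|\lambda-\lambda'|\,T+C',
\]
uniformly in $t\in\R$, $T\ge0$. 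Dividing by $T$, taking $\inf_{t\in\R}$ and then $T\to+\infty$, and invoking \eqref{eq:lmc}, the additive constants $C'$ and the ``$t$-only'' discrepancies drop out and one is left with $|\lambda\lm{c_\lambda}-\lambda'\lm{c_{\lambda'}}|\le C|\lambda-\lambda'|$, as desired.

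The main obstacle I anticipate is bookkeeping the non-periodic factor $e^{-(\lambda'-\lambda)x\cdot e}$ correctly: one cannot simply compare $\eta_\lambda$ with $\eta_{\lambda'}$ directly since they solve equations with genuinely different (not merely time-shifted) coefficients, and the cleanest route is to compare the original linearized solutions $v_\lambda:=e^{-\lambda x\cdot e}\eta_\lambda$ and $v_{\lambda'}:=e^{-\lambda' x\cdot e}\eta_{\lambda'}$, which solve the \emph{same} equation \eqref{eq:linearized}, and then translate the resulting $L^\infty$-comparison back to $\eta_\lambda,\eta_{\lambda'}$ using that on each cell $e^{-(\lambda-\lambda')x\cdot e}$ is pinched between two positive constants depending only on $|l|$ and $[\lambda_0,\lambda_1]$; this is exactly the place where periodicity in $x$ (hypothesis \eqref{eq:periodic}), already used in Lemma \ref{lem:eta}, re-enters. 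The rest — choosing the multiplicative constant at the initial time via the same-time Harnack inequality \eqref{eq:mineta0}, and verifying the super/subsolution property by a direct computation — is routine. I would carry the argument out for the least mean in full and then remark that the upper-mean case is identical.
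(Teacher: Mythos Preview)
Your proposal has a genuine gap in the treatment of the first-order (drift) discrepancy between the equations \eqref{eq:eta} for $\eta_\lambda$ and for $\eta_{\lambda'}$. Neither of the two routes you sketch works as stated. A multiplicative ``bounded-in-$x$ factor'' cannot convert the drift $-(q+2\lambda' Ae)$ into $-(q+2\lambda Ae)$: writing such a factor as $e^{g(x)}$, one needs $Dg=(\lambda-\lambda')e$, i.e.\ $g(x)=(\lambda-\lambda')x\cdot e$, which is unbounded. Your alternative --- comparing $v_\lambda=e^{-\lambda x\cdot e}\eta_\lambda$ and $v_{\lambda'}=e^{-\lambda' x\cdot e}\eta_{\lambda'}$, which do solve the same equation \eqref{eq:linearized} --- fails at the level of initial data: their ratio equals $e^{(\lambda'-\lambda)x\cdot e}\,\eta_\lambda/\eta_{\lambda'}$, which is unbounded on $\R^N$ whenever $\lambda\neq\lambda'$, so no ordering $v_\lambda\le C\,v_{\lambda'}$ can be arranged at any time $t_0$ and the comparison principle never gets started. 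Restricting attention to a single periodicity cell does not help, since the $v_\lambda$ are not periodic and the maximum principle must be applied globally.

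Your perturbative idea can be salvaged, but only after first establishing a uniform gradient bound $|D\eta_{\lambda'}|\le C_0\,\eta_{\lambda'}$ (obtainable from interior $W^{2,p}$ estimates combined with \eqref{eq:maxeta}--\eqref{eq:mineta}); then the extra term $2(\lambda-\lambda')Ae\cdot D\eta_{\lambda'}$ becomes a zeroth-order perturbation of size $O(|\lambda-\lambda'|)\,\eta_{\lambda'}$ and the barrier $C\,\eta_{\lambda'}\,e^{\pm K|\lambda-\lambda'|(t-t_0)}$ does the job. The paper instead takes a log-convexity route that sidesteps the drift issue entirely: writing $v_j=e^{w_j}$ ($j=0,1$), the equation for $w_j$ has convex Hamiltonian $p\mapsto\Tr(A\,p\otimes p)$, so the interpolant $w=(1-\tau)w_0+\tau w_1$ is a \emph{super}solution of that same equation; consequently $\eta_{\lambda_0}^{1-\tau}\eta_{\lambda_1}^\tau$ is a supersolution of \eqref{eq:eta} with $\lambda=\lambda_\tau:=(1-\tau)\lambda_0+\tau\lambda_1$, and comparison with $\eta_{\lambda_\tau}$ yields the key inequality \eqref{eq:Q}. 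Fixing $\lambda_1$ at a definite distance from $\lambda_0$ and letting $\tau$ vary then gives the Lipschitz bound.
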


\begin{proof}
Fix $\Lambda>0$ and $-\Lambda\leq\lambda_0\leq\Lambda$. Let $\lambda_1$ be such
that $|\lambda_1-\lambda_0|=2\Lambda$. For $j=0,1$, the function $v_j(x,t):=
e^{-\lambda_j x\.e}\eta_{\lambda_j}(x,t)$ satisfies \eq{linearized}. Hence,
setting 
$v_j=e^{w_j}$, we find that
$$\partial_t w_j-\Tr\big(AD^2w_j\big)+q\. Dw_j=\mu
+\Tr\big(ADw_j\otimes Dw_j\big),\quad x\in\R^N,\ t\in\R.$$
For $\tau\in(0,1)$, the function $w:=(1-\tau)w_0+\tau w_1$ satisfies, for
$x\in\R^N$, $t\in\R$,
\[\begin{split}
\partial_t w-\Tr(AD^2w)+q\. Dw &=\mu
+\Tr\Big(A\big((1-\tau)Dw_0\otimes Dw_0+ \tau Dw_1\otimes Dw_1\big)\Big)\\
&\geq\mu+\Tr(ADw\otimes Dw).
\end{split}\]
As a consequence, $e^w$ is a supersolution of \eq{linearized} and then, since
$$e^{w(x,t)}=e^{-((1-\tau)\lambda_0+\tau\lambda_1)x\.e}
\eta_{\lambda_0}^{1-\tau}(x,t)
\eta_{\lambda_1}^\tau(x,t),$$
the function $\eta_{\lambda_0}^{1-\tau}\eta_{\lambda_1}^\tau$
is a supersolution of \eq{eta} with $\lambda=\lambda_\tau:=
(1-\tau)\lambda_0+\tau\lambda_1$. 
We can therefore apply the comparison principle between this function and 
$\eta_{\lambda_\tau}$ and derive, for $t\in\R$, $T>0$,
\[\begin{split}
\frac{\|\eta_{\lambda_\tau}(\.,t+T)\|_{L^\infty(\R^N)}}
{\|\eta_{\lambda_\tau}(\.,t)\|_
{L^\infty(\R^N)}} &\leq
\frac{\|\eta_{\lambda_0}^{1-\tau}\eta_{\lambda_1}^\tau(\.,t+T)\|_{L^\infty(\R^N)
}}{\min_{x\in\R^N}\eta_{\lambda_0}^{1-\tau}\eta_{\lambda_1}^\tau(x,t)}\\
&\leq\left(\frac{\|\eta_{\lambda_0}(\.,t+T)\|_{L^\infty(\R^N)
}}{\min_{x\in\R^N}\eta_{\lambda_0}(x,t)}\right)^{1-\tau}
\left(\frac{\|\eta_{\lambda_1}(\.,t+T)\|_{L^\infty(\R^N)
}}{\min_{x\in\R^N}\eta_{\lambda_1}(x,t)}\right)^\tau.
\end{split}\]
Whence, using the inequality \eq{mineta0} for $\eta_{\lambda_0}$ and
$\eta_{\lambda_1}$ (with the same $C$ depending on $\Lambda$), we obtain
\Fi{Q}
\frac{\|\eta_{\lambda_\tau}(\.,t+T)\|_{L^\infty(\R^N)}}
{\|\eta_{\lambda_\tau}(\.,t)\|_
{L^\infty(\R^N)}} \leq
C^{-1}\left(\frac{\|\eta_{\lambda_0}(\.,t+T)\|_{L^\infty(\R^N)}}
{\|\eta_{\lambda_0}(\.,t)\|_{L^\infty(\R^N)}}\right)^{1-\tau}
\left(\frac{\|\eta_{\lambda_1}(\.,t+T)\|_{L^\infty(\R^N)
}}{\|\eta_{\lambda_1}(\.,t)\|_{L^\infty(\R^N)}}\right)^\tau.
\Ff
Consider the function $\Gamma$ defined by $\Gamma(\lambda):= 
\lambda\lm{c_\lambda}$. It follows from \eq{lmc} and \eq{Q} that
$$\Gamma(\lambda_\tau)\leq\lim_{T\to+\infty}
\inf_ { t\in\R }
\frac1T\left((1-\tau)\ln\frac{\|\eta_{\lambda_0}(\.,t+T)\|_{L^\infty(\R^N)}}
{\|\eta_{\lambda_0}(\.,t)\|_{L^\infty(\R^N)}}+
\tau\ln\frac{\|\eta_{\lambda_1}(\.,t+T)\|_{L^\infty(\R^N)}}
{\|\eta_{\lambda_1}(\.,t)\|_{L^\infty(\R^N)}}\right).$$
If the $(c_\lambda)_{\lambda>0}$ admit uniform mean, the above inequality and 
\eq{lmc} imply that $\Gamma$ is convex. Otherwise, we can only infer that
$$\Gamma(\lambda_\tau)\leq(1-\tau)\Gamma(\lambda_0)+
\tau\lambda_1\lceil c_{\lambda_1}\rceil.
$$
We have therefore shown that
$$\forall\tau\in(0,1),\quad
\Gamma(\lambda_\tau)-\Gamma(\lambda_0)\leq\tau
(\lambda_1\lceil c_{\lambda_1}\rceil-\lambda_0\lfloor c_{\lambda_0}\rfloor).$$
Thus, by \eq{lmbounds} and \eq{umbounds} there exists a constant $K>0$,
depending on $A$, $q$, $\mu$, such that
$$\forall\tau\in(0,1),\quad\Gamma(\lambda_\tau)-\Gamma(\lambda_0)\leq
K(\Lambda^2+1)\tau.$$
This proves the Lipschitz continuity of $\Gamma$ on $[-\Lambda,\Lambda]$,
because $|\lambda_\tau-\lambda_0|=2\Lambda\tau$, concluding the proof of the 
lemma.
%
%

The same arguments lead to the local Lipschitz continuity of $\lambda\mapsto\left\lceil{c_\lambda}\right\rceil$.

\end{proof}


\subsection{Definition of the critical speed} \label{sec:subsol}

In order to define the critical speed $c_*$, we introduce the following set:
\begin{equation} \label{eq:defLambda} \Lambda:=\{\lambda>0\ :\ \exists \ol k>0,\
\forall0<k<\ol k,\ \lm{c_\lambda-c_{\lambda+k}}>0\}.\end{equation}

\begin{lemma}\label{lem:Lambda}
There exists $\lambda_*>0$ such that $\Lambda=(0,\lambda_*)$. Moreover, the
function $\lambda\mapsto\lm{c_\lambda}$ is decreasing on $\Lambda$.
\end{lemma}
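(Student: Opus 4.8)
The plan is to establish the two assertions of Lemma~\ref{lem:Lambda} by exploiting the Lipschitz continuity of $\Gamma(\lambda):=\lambda\lm{c_\lambda}$ from Lemma~\ref{lem:c_l}, together with the elementary inequality $\lm{g}+\lm{h}\le\lm{g+h}\le\lm{g}+\um{h}$, which is the standard superadditivity/subadditivity of the least mean (it follows directly from Definition~\ref{def:mean}). The first key observation is that $\lm{c_\lambda-c_{\lambda+k}}>0$ should be rephrased in terms of $\Gamma$. Writing $c_\lambda-c_{\lambda+k}$ and using superadditivity, $\lm{c_\lambda-c_{\lambda+k}}\ge\lm{c_\lambda}+\lm{-c_{\lambda+k}}=\lm{c_\lambda}-\um{c_{\lambda+k}}$, but this is too weak; instead I would work directly from the convexity-type estimate \eqref{eq:Q} established in the proof of Lemma~\ref{lem:c_l}. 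That estimate, taken with $\lambda_0=\lambda$, $\lambda_1=\lambda+k'$ for $k'$ slightly larger than $k$, and an appropriate $\tau$ so that $\lambda_\tau=\lambda+k$, gives a bound of the form
\[
\ln\frac{\|\eta_{\lambda+k}(\cdot,t+T)\|_\infty}{\|\eta_{\lambda+k}(\cdot,t)\|_\infty}
\le (1-\tau)\ln\frac{\|\eta_{\lambda}(\cdot,t+T)\|_\infty}{\|\eta_{\lambda}(\cdot,t)\|_\infty}
+\tau\ln\frac{\|\eta_{\lambda+k'}(\cdot,t+T)\|_\infty}{\|\eta_{\lambda+k'}(\cdot,t)\|_\infty}-\ln C,
\]
which upon dividing by $\lambda T$, using \eqref{eq:Slambda} to pass to $c$'s, and taking $\inf_t$ then $\lim_{T\to\infty}$, translates into a relation among $\lm{c_\lambda}$, $\lm{c_{\lambda+k}}$ and $\um{c_{\lambda+k'}}$ — or, more cleanly, into the statement that $\Gamma$ is ``convex up to the local Lipschitz error'', i.e.\ $\Gamma(\lambda+k)\le\Gamma(\lambda)+\frac{k}{k'}\big(\Gamma(\lambda+k')-\Gamma(\lambda)\big)+o(1)$ type control.

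The cleaner route, which I expect to work, is the following. First I would show that $\Lambda$ is an interval of the form $(0,\lambda_*)$ (possibly with $\lambda_*=+\infty$, to be excluded later). To see $\Lambda$ is an ``initial segment'': suppose $\lambda\in\Lambda$ and $0<\lambda'<\lambda$; I want $\lambda'\in\Lambda$. Using the superadditivity of $\lm{\cdot}$ and splitting $c_{\lambda'}-c_{\lambda'+k}$ appropriately — chaining through $c_\lambda$ when $\lambda'+k\le\lambda$, and using the lower bound \eqref{eq:lmbounds} which shows $\lm{c_\mu}\to+\infty$ as $\mu\to0^+$ (because of the $\frac1\mu\lm{\min\mu}$ term, and $\lm{\min_x\mu(x,\cdot)}>0$ under the running hypothesis \eqref{eq:f>0}, which forces $\inf\mu>0$) — one gets that for small $\mu$ the function $\lm{c_\mu}$ is strictly decreasing, hence $(0,\lambda_0)\subset\Lambda$ for some $\lambda_0$. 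For the general ``segment'' property I would argue: if $\lambda\in\Lambda$ then for all small $k>0$, $\lm{c_\lambda}-\lm{c_{\lambda+k}}\ge\lm{c_\lambda-c_{\lambda+k}}>0$ (again superadditivity applied to $c_\lambda=(c_\lambda-c_{\lambda+k})+c_{\lambda+k}$), so $\lambda\mapsto\lm{c_\lambda}$ is \emph{locally} strictly decreasing at every point of $\Lambda$; combined with the continuity from Lemma~\ref{lem:c_l} and the upper bound in \eqref{eq:lmbounds} (which shows $\lm{c_\lambda}\to+\infty$ as $\lambda\to+\infty$, so the function cannot be globally decreasing), the set $\Lambda$ must be of the form $(0,\lambda_*)$ with $\lambda_*<+\infty$.

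Then the ``moreover'' clause — that $\lambda\mapsto\lm{c_\lambda}$ is decreasing on all of $\Lambda=(0,\lambda_*)$ — follows from the local-strict-decrease observation together with continuity: a continuous function on an interval that is strictly decreasing in a neighborhood of every point is strictly decreasing on the whole interval (a routine connectedness argument: the set of $\lambda$ where $\lm{c_{\lambda'}}<\lm{c_\lambda}$ for all $\lambda'\in(\lambda,\lambda_*)$ is open, closed and nonempty in $(0,\lambda_*)$).

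The main obstacle I anticipate is the very first step: correctly extracting, from \eqref{eq:Q} or directly from the definitions, the inequality $\lm{c_\lambda}-\lm{c_{\lambda+k}}\ge\lm{c_\lambda-c_{\lambda+k}}$ and verifying it is genuinely available — it is, since it is just $\lm{(c_\lambda-c_{\lambda+k})+c_{\lambda+k}}\ge\lm{c_\lambda-c_{\lambda+k}}+\lm{c_{\lambda+k}}$, i.e.\ superadditivity of the least mean, already recorded in \cite{NR1}. The second delicate point is ensuring $\lambda_*<+\infty$, i.e.\ that $\Lambda\ne(0,+\infty)$; this is exactly where the upper estimate in \eqref{eq:lmbounds}, forcing $\lm{c_\lambda}\to+\infty$ as $\lambda\to+\infty$, is essential, for it contradicts $\lambda\mapsto\lm{c_\lambda}$ being decreasing on an unbounded interval. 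Everything else is soft continuity/connectedness reasoning on $\R$, so no heavy computation is needed beyond what is already in \eqref{eq:Q} and \eqref{eq:lmbounds}.
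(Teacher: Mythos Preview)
Your ``cleaner route'' has a genuine gap: you only extract the one-way implication
\[
\lambda\in\Lambda\ \Rightarrow\ \lm{c_\lambda}>\lm{c_{\lambda+k}}\quad\text{for small }k>0,
\]
via superadditivity, and then try to read off the structure of $\Lambda$ from the behaviour of the map $\lambda\mapsto\lm{c_\lambda}$. But membership in $\Lambda$ is the condition $\lm{c_\lambda-c_{\lambda+k}}>0$, which is \emph{strictly stronger} than $\lm{c_\lambda}-\lm{c_{\lambda+k}}>0$; the reverse implication fails. So from ``$\lm{c_\lambda}$ is locally decreasing to the right at every point of $\Lambda$'', together with continuity and the blow-up at $0^+$ and $+\infty$, you cannot conclude that $\Lambda$ is an initial interval: nothing rules out, say, $\Lambda$ being a union of two disjoint intervals with $\lm{c_\lambda}$ decreasing on each. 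Your connectedness argument for the ``moreover'' clause presupposes that $\Lambda$ is already an interval, so it cannot be used to establish that fact. You also never address why $\lambda_*:=\sup\Lambda$ does \emph{not} belong to $\Lambda$, which is a separate (and non-trivial) point. A minor slip: it is the \emph{lower} bound in \eqref{eq:lmbounds} that forces $\lm{c_\lambda}\to+\infty$ as $\lambda\to+\infty$, not the upper one.

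What the paper does, and what you correctly identified as the relevant tool before abandoning it, is to go back to \eqref{eq:Q} and derive inequalities directly for the quantities $\lm{c_{\lambda_0}-c_{\lambda_\tau}}$ themselves (by dividing the log of \eqref{eq:Q} by $\pm T$ before taking $\inf_t$ and $\lim_T$). This yields, for $\lambda_\tau=(1-\tau)\lambda_0+\tau\lambda_1$,
\[
\lm{c_{\lambda_0}-c_{\lambda_\tau}}\geq\tau\,\frac{\lambda_1}{\lambda_\tau}\,\lm{c_{\lambda_0}-c_{\lambda_1}},
\qquad
\lm{c_{\lambda_\tau}-c_{\lambda_0}}\leq\tau\,\frac{\lambda_1}{\lambda_\tau}\,\lm{c_{\lambda_1}-c_{\lambda_0}}.
\]
These are exactly what is needed: the first lets you propagate the condition $\lm{c_\lambda-c_{\lambda+k}}>0$ from one value of $k$ to all smaller ones (giving nonemptiness and the segment property $\lambda\in\Lambda\Rightarrow(0,\lambda]\subset\Lambda$), while both together with continuity of $\lambda\mapsto\um{c_\lambda}$ are used to show $\sup\Lambda\notin\Lambda$. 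Your superadditivity inequality alone is too coarse for either step.
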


\begin{proof}
Fix $\lambda_0,\lambda_1>0$. For $\tau\in(0,1)$, we set
$\lambda_\tau:=(1-\tau)\lambda_0+\tau\lambda_1$. Taking the $\ln$ in
\eq{Q}, and recalling that $c_\lambda=S_\lambda'$ with $S_\lambda$ satisfying
\eqref{eq:Slambda} yields
$$\int_t^{t+T}[(1-\tau)\lambda_0 c_{\lambda_0}+\tau\lambda_1
c_{\lambda_1}-\lambda_\tau c_{\lambda_\tau}]\geq \ln C-
4\lambda_\tau\beta.$$
Whence,
$$\lambda_\tau\int_t^{t+T}(c_{\lambda_0}-c_{\lambda_\tau})\geq
\tau\lambda_1\int_t^{t+T}(c_{\lambda_0}-c_{\lambda_1})+\ln C-
4\lambda_\tau\beta.$$
Dividing both sides by $T$, taking the $\inf$ on $t\in\R$ and then the $\lim$ as
$T\to+\infty$, we derive
\Fi{moveR}
\forall\tau\in(0,1),\quad
\lm{c_{\lambda_0}-c_{\lambda_\tau}}\geq\tau\frac{\lambda_1}{\lambda_\tau}
\lm{c_{\lambda_0}-c_{\lambda_1}}.
\Ff
If, instead, we divide by $-T$, we get
\Fi{moveL}
\forall\tau\in(0,1),\quad
\lm{c_{\lambda_\tau}-c_{\lambda_0}}\leq\tau\frac{\lambda_1}{\lambda_\tau}
\lm{c_{\lambda_1}-c_{\lambda_0}}.
\Ff
Analogous estimates hold of course for the upper mean.
The characterization of $\Lambda$ follows from these inequalities,
by suitable choices of $\lambda_0$,
$\lambda_1$ and $\tau$. We prove it in four steps.

Step 1. {\em $\Lambda\neq\emptyset$.}\\
The first inequality in \eq{lmbounds}, together with \eq{mu>0}, yield
$$\lim_{\lambda\to0^+}\lm{c_\lambda-c_1}\geq\lim_{\lambda\to0^+}\lm{c_\lambda}
-\um{c_1}=+\infty.$$
There exists then $0<\lambda<1$ such that $\lm{c_\lambda-c_1}>0$. Applying
\eq{moveR} with $\lambda_0=\lambda$, $\lambda_1=1$,
we eventually infer that $\lm{c_\lambda-c_{\lambda+k}}>0$, for all
$0<k<1-\lambda$, that is, $\lambda\in\Lambda$.

Step 2. {\em $\Lambda$ is bounded from above.}\\
By \eq{lmbounds} we obtain
$$\lim_{\lambda\to+\infty}\lm{c_1-c_{\lambda}}\leq
\lm{c_1}-\lim_{\lambda\to+\infty}\lm{c_\lambda }=-\infty.$$
There exists then $\lambda'>1$ such that, for
$\lambda>\lambda'$, $\lm{c_1-c_{\lambda}}<0$. Hence, for $k>0$,
applying \eq{moveL} with $\lambda_0=\lambda+k$, $\lambda_1=1$ and
$\tau=k/(k+\lambda-1)$, we derive
$$\lm{c_\lambda-c_{\lambda+k}}\leq\frac k{(k+\lambda-1)\lambda}
\lm{c_1-c_{\lambda+k}}<0.$$
Namely, $\lambda\notin\Lambda$ and thus $\Lambda$ is bounded from above by 
$\lambda'$.

Step 3. {\em If $\lambda\in\Lambda$ then $(0,\lambda]\subset\Lambda$.}\\
Let $0<\lambda'<\lambda$ and $k>0$. Using first \eq{moveR} and then \eq{moveL}
we get
$$\lm{c_{\lambda'}-c_{\lambda'+k}}\geq \left(\frac
k{k+\lambda-\lambda'}\right)\left(\frac{\lambda+k}{\lambda'+k}\right)
\lm{c_{\lambda'}-c_{\lambda+k}}\geq\left(\frac{\lambda+k}{\lambda'+k}\right)
\frac\lambda{
\lambda'}\lm{c_{\lambda}-c_{\lambda+k}}.$$
Thus, $\lambda\in\Lambda$ implies $\lambda'\in\Lambda$.

Step 4. {\em $\sup\Lambda\notin\Lambda$.}\\
Let $\lambda^*:=\sup\Lambda$ and $k>0$. For all $n\in\N$, there exists
$0<k_n<1/n$ such that $\lm{c_{\lambda^*+1/n}-c_{\lambda^*+1/n+k_n}}\leq0$.
For $n$ large enough, we have that $1/n+k_n<k$ and then, by \eq{moveR},
$$0\geq\lm{c_{\lambda^*+1/n}-c_{\lambda^*+1/n+k_n}}\geq
\left(\frac{k_n}{k-1/n}\right)\left(\frac{\lambda^*+k}{\lambda^*+1/n+k_n}\right)
\lm{c_{\lambda^*+1/n}-c_{\lambda^*+k}}.$$
Whence, 
$$\lm{c_{\lambda^*}-c_{\lambda^*+k}}\leq
\lm{c_{\lambda^*+1/n}-c_{\lambda^*+k}}+\um{c_{\lambda^*}-c_{\lambda^*+1/n}}
\leq\um{c_{\lambda^*}-c_{\lambda^*+1/n}}.$$
Using the the analogue of \eq{moveL} for the upper mean, we can control the 
latter term as follows:
$$\um{c_{\lambda^*}-c_{\lambda^*+1/n}}\leq
\frac{1/n}{\lambda_*+2/n}\um{c_{\lambda^*/2}-c_{\lambda^*+1/n}}
\leq\frac{1/n}{\lambda_*+2/n}\left(\um{c_{\lambda^*/2}}-\lm{c_{\lambda^*+1/n}}
\right),$$
which goes to $0$ as $n\to\infty$ (recall that
$\lambda\mapsto\lm{c_{\lambda}}$ is continuous by Lemma \ref{lem:c_l}).
We eventually infer that $\lm{c_{\lambda^*}-c_{\lambda^*+k}}\leq0$, that is,
$\lambda^*\notin\Lambda$.

It remains to show that $\lambda\mapsto\lm{c_\lambda}$ is decreasing on
$\Lambda$. Assume by contradiction that there are
$0<\lambda_1<\lambda_2<\lambda^*$ such that
$\lm{c_{\lambda_1}}\leq\lm{c_{\lambda_2}}$. 
The function $\lambda\mapsto\lm{c_\lambda}$, being continuous, 
attains its minimum on $[\lambda_1,\lambda_2]$ at some $\lambda$. Since
$\lm{c_{\lambda_1}}\leq\lm{c_{\lambda_2}}$, we can assume that
$\lambda\in[\lambda_1,\lambda_2)$. The definition of $\Lambda$ implies that
there exists $\lambda'\in(\lambda,\lambda_2)$ such that $\lm{c_\lambda
-c_{\lambda'}}>0$. Whence 
$$\lm{c_{\lambda'}}\leq\lm{c_\lambda}+\um{c_{\lambda'}-c_\lambda}=
\lm{c_\lambda}-\lm{c_\lambda-c_{\lambda'}}<
\lm{c_\lambda}.$$
This is impossible.
\end{proof}

%
%

We are now in position to define the critical speed $c_*$:
\begin{equation} \label{eq:defc*}
c_*:=\lm{c_{\lambda_*}},
\end{equation}
where $\lambda_*$ is given in Lemma \ref{lem:Lambda}.

\begin{remark}\label{rem:per}
When the terms in \eqref{eq:princip} are periodic in time, resuming from
Section \ref{sec:method}, we know that the speeds $(c_\lambda)_{\lambda>0}$ are
constant and satisfy $c_\lambda\equiv k(\lambda)/\lambda$, where $k(\lambda)$ is
the \pe\ of problem \eqref{eq:defklambda}. Hence,
$$\lm{c_\lambda -c_{\lambda+\kappa}}=\frac{k(\lambda)}{\lambda} -
\frac{k(\lambda+\kappa)}{\lambda+\kappa}.$$
As $\lambda\mapsto k(\lambda)$ is strictly convex (see \cite{Nadinptf}) and, 
by \eq{lmbounds}, 
$$\lim_{\lambda\to+\infty}\frac{k(\lambda)}\lambda=+\infty,\qquad\lim_{
\lambda\to0^+ } k(\lambda)=\lim_{\lambda\to0^+}\lambda
c_\lambda\geq\lm{\min_{x\in\R^N}\mu(x,\.)}>0,
$$
straightforward convexity arguments yield that 
$\lambda_* $ given by Lemma \ref{lem:Lambda} is the unique minimizer of
$\lambda\mapsto k(\lambda)/\lambda$. Therefore, $c_*= \min_{\lambda
>0}k(\lambda)/\lambda$, 
which is known to be the minimal speed for pulsating travelling
fronts (see \cite{Nadinptf}).
\end{remark}

\subsection{Construction of a subsolution and conclusion of the
proof}\label{sec:ccl}

In order to prove \thm{ex}, we introduce a family of functions
$(\vp_\lambda)_{\lambda>0}$ which play the role of the spatial periodic
principal eigenfunctions in the time-independent case. 
For $\lambda>0$, let $\eta_\lambda$ be the function given by Lemma
\ref{lem:eta}, normalized by $\|\eta_\lambda(\.,0)\|_{L^\infty(\R^N)}=1$. We
define
$$\vp_\lambda(x,t):=e^{-\lambda S_\lambda(t)}\eta_\lambda(x,t).$$
By \eq{Slambda} and \eq{mineta0}, there exist two positive constants
$C_\lambda,\beta$ such that
\Fi{vpbounds}
\forall x\in\R^N,\ t\in\R,\quad C_\lambda\leq\vp_\lambda(x,t) \leq
e^{\lambda\beta}.
\Ff

We will make use of the following key property of the least mean, provided
by Lemma~3.2 of \cite{NR1}:
\Fi{lm}
\forall g\in L^\infty(\R),\quad
\lfloor g\rfloor=\sup_{\sigma\in
W^{1,\infty}(\R)}\inf_{t\in\R}(\sigma'+g)(t).
\Ff

\begin{proof}[Proof of \thm{ex}]
Fix $\gamma>c_*$. Since the function 
$\lambda\mapsto\left\lfloor{c_\lambda}\right\rfloor$
is continuous by Lemma \ref{lem:c_l} and
tends to $+\infty$ as $\lambda\to0^+$ by \eq{lmbounds}, and
$\Lambda=(0,\lambda_*)$ by Lemma \ref{lem:Lambda}, there exists
$\lambda\in\Lambda$ such that $\left\lfloor{c_\lambda}\right\rfloor=\gamma$.
The function $w$ defined by
$$w(x,t):=\min\left(e^{-\lambda x\.e}\eta_\lambda(x,t)\,,
\,1\right)$$
is a generalized supersolution of \eq{princip}.

In order to construct a subsolution, consider the constant $\nu$ in 
\eq{C1gamma}. By the definition of
$\Lambda$, there exists $\lambda<\lambda'<(1+\nu)\lambda$ such that
$\lm{c_\lambda-c_{\lambda'}}>0$.
We then set
$\psi(x,t):=e^{\sigma(t)-\lambda'(x\.e-S_\lambda(t)
+S_{\lambda'}(t))}\eta_{\lambda'}(x,t)$ ,
where $\sigma\in W^{1,\infty}(\R)$ will be chosen later.
We have that
$$\partial_t \psi
-\Tr(A(x,t)D^2\psi)+q(x,t)\.D\psi-\mu(x,t)\psi=
\left[\sigma'(t)+\lambda'(c_\lambda(t)-c_{\lambda'}(t))\right]\psi.$$
Since $\lm{\lambda'(c_\lambda-c_{\lambda'})}=\lambda'
\lm{c_\lambda-c_{\lambda'}}>0$, 
by \eq{lm} we can choose $\sigma\in
W^{1,\infty}(\R)$ in such a way
that $K:=\inf_{\R}(\sigma'+\lambda'(c_\lambda-c_{\lambda'}))>0$. Hence,
$$\partial_t \psi
-\Tr(A(x,t)D^2\psi)+q(x,t)\.D\psi\geq(\mu(x,t)+K)\psi,\quad
x\in\R^N,\ t\in\R.$$
We define:
$$v(x,t):=e^{-\lambda
x\.e}\eta_\lambda(x,t)-m\psi(x,t),$$
where $m$ is a positive constant to be chosen. By computation,
$$e^{-\lambda x\.e}\eta_\lambda(x,t)-m\psi(x,t)=
e^{-\lambda(x\.e-S_\lambda (t))}\left(\vp_\lambda(x,t)-m\vp_{\lambda'
}(x,t)
e^{\sigma(t)-(\lambda'-\lambda)(x\.e-S_\lambda(t))}\right).$$
Since $\vp_\lambda$, $\vp_{\lambda'}$ satisfy \eq{vpbounds} and $\sigma \in 
L^\infty (\R)$, 
it follows that, choosing $m$ large enough, $v(x,t)\leq0$ 
if $x\.e-S_\lambda (t)\leq0$, and that $v$ is less than $\delta\in(0,1]$ from 
\eq{C1gamma} everywhere. If $v(x,t)>0$, and then
$x\.e-S_\lambda (t)>0$, we see that
\[\begin{split}
\partial_t v-\Tr(A(x,t)D^2v)+q(x,t)\.Dv-\mu(x,t)v &\leq-mK\psi\\
&\leq-mK\psi\frac{v^{1+\nu}}{e^{-(1+\nu)\lambda x\.e}\eta_\lambda
^{1+\nu}}\\
&=-mK v^{1+\nu}\frac{\vp_{\lambda'}}{\vp_{\lambda}^{1+\nu}}
e^{\sigma(t)-(\lambda'-(1+\nu)\lambda)(x\.e-S_\lambda(t))}\\
&\leq-mK
v^{1+\nu}C_{\lambda'}e^{-(1+\nu)\lambda\beta}
\inf_{s\in\R}e^{\sigma(s)},
\end{split}
\]
where, for the last inequality, we have used \eq{vpbounds} and the fact that
$\lambda'<(1+\nu)\lambda$. As a consequence,
by hypothesis \eq{C1gamma}, for $m$ sufficiently large,
$v$ is a subsolution of \eq{princip} in the set where it is positive.

Using again \eq{vpbounds}, one computes:
$$\begin{array}{rcl}
v(x+S_\lambda (t)e,t) &= & e^{-\lambda x\cdot e}\Big( \vp_{\lambda}(x+ S_{\lambda}(t)e,t) - m\vp_{\lambda'}(x+S_{\lambda}(t)e,t) e^{\sigma(t)-(\lambda'-\lambda) x\cdot e}\Big)\\
&\geq & e^{-\lambda x\cdot e}\Big( C_{\lambda}- 
m C_{\lambda'}e^{\lambda'\beta +\|\sigma\|_{\infty}-(\lambda'-\lambda) x\cdot 
e}\Big).\\
\end{array}$$
Hence, taking $R$ large enough, one has:
$$\inf_{\su{x\cdot e= R}{t\in \R}}v(x+S_\lambda (t)e,t) \geq 
e^{-\lambda R}\Big( C_{\lambda}- m C_{\lambda'}e^{\lambda'\beta 
+\|\sigma\|_{\infty}-(\lambda'-\lambda) R}\Big)=:\omega\in(0,1).$$
Consequently, the function $\ul v$ defined by
$$\underline{v} (x,t) := \left\{ \begin{array}{ccl}
v(x,t)   &\hbox{ if }& x\cdot e \geq S_{\lambda}(t)+R,\\
\max\big(\omega,v(x,t)\big)   &\hbox{ if }& x\cdot e<S_{\lambda}(t)+R,
\end{array} \right. $$
is continuous and, because of \eq{f>0}, it is a generalized subsolution of 
\eq{princip}.
Moreover, since $v\leq w$ and $w(x+S_\lambda (t)e,t)\geq 
e^{-\lambda R}C_{\lambda}>\omega$ if 
$x\cdot e<R$, one sees that $\underline{v}\leq w$.
A solution $\underline{v}\leq u \leq w$ can therefore be obtained as the limit 
of (a
subsequence of) the solutions $\seq u$ of the problems
$$\left\{\begin{array}{l}
\partial_t u_n -\Tr(A(x,t)D^2 u_n)+q(x,t)\.D u_n=
f(x,t,u_n), \quad
x\in\R^N,\ t>-n\\
 u_n(x,-n)=w(x,-n), \quad x\in\R^N.
\end{array}\right.$$
The strong maximum principle yields $u>0$.
One further sees that
$$\lim_{x\.e\to+\infty} u(x+e\int_0^t
c_\lambda(s)ds,t)\leq\lim_{x\.e\to+\infty} w(x+e\int_0^t
c_\lambda(s)ds,t)\leq\lim_{x\cdot e\to +\infty} 
e^{-\lambda x\cdot e}\vp_\lambda(x,t)=0,$$
uniformly with respect to $t\in\R$. 
It remains to prove that 
$$\lim_{x\.e\to-\infty} u(x+e\int_0^t c_\lambda(s)ds,t)=1$$
holds uniformly with respect to $t\in\R$. Set
$$\vartheta:=\lim_{r\to-\infty}\inf_{\su{x\.e\leq r}{t\in\R}}
u(x+e\int_0^t c_\lambda(s)ds,t).$$
Our aim is to show that $\vartheta=1$. We know that $\vartheta\geq\omega>0$, 
because $u(x,t)\geq\ul v(x,t)\geq\omega$ if $x\.e<S_{\lambda}(t)+R$.
Let $\seq x$ in $\R^N$ and $(t_n)_{n\in\N}$ in $\R$ be such that
$$\limn x_n\.e=-\infty,\qquad
\lim_{n\to\infty}u\big(x_n+e\int_0^{t_n}c_\lambda(s)ds,t_n\big)=\vartheta.$$
For $n\in\N$, let $k_n\in\prod_{j=1}^N l_j\Z$ be such that 
$y_n:=x_n+e\int_0^{t_n}c_\lambda(s)ds-k_n\in\prod_{j=1}^N[0,l_j)$ and define
$v_n(x,t):=
u(x+k_n,t+t_n)$.
The functions $\seq v$ are solutions of 
$$\partial_t v_n -\Tr(A(x, t+t_n)D^2v_n)+q(x, t+t_n)\.Dv_n= f(x,t+t_n,v_n), \quad
x\in\R^N,\ t\in\R.$$
By parabolic estimates, one can show using the same types of arguments as in 
the proof of Lemma \ref{lem:eta} that $\seq v$ converges (up to subsequences)
locally uniformly to some function $v$ satisfying
$$\partial_t v -\Tr(\tilde{A}(x,t)D^2v)+\tilde{q}(x,t)\.Dv= g(x,t)\geq 0, \quad
x\in\R^N,\ t\in\R,$$
where $\tilde{A}$ and $\tilde{q}$ are the strong limits in $L^\infty_{loc} 
(\R^{N+1})$ and $g$ is the weak limit in $L^p_{loc}(\R^N\times\R)$ of (a 
subsequence
of) $A(x,t+t_n)$, $q(x,t+t_n)$ and $f(x,t+t_n,v_n(x,t))$ respectively, the 
inequality $g\geq 0$ coming from hypothesis
\eq{f>0}. Furthermore, letting $y$ be the limit of (a
converging
subsequence of) $(y_n)_{n\in\N}$, we find
that $v(y,0)=\vartheta$ and
$$\forall x\in\R^N,\ t\in\R,\quad
v(x,t)=\lim_{n\to\infty}u(x+x_n+e\int_0^{t_n}c_\lambda(s)ds-y_n,t+t_n)
\geq\vartheta.$$
As a consequence, the strong maximum principle
yields
$v=\vartheta$ in $\R^N\times(-\infty,0]$. In particular, $g=0$
in $\R^N\times(-\infty,0)$.
Using the Lipschitz continuity of $f(x,t,\.)$,
we then derive for all $(x,t) \in\R^N\times (-\infty,0)$:
$$\forall T>0,\quad
0=\lim_{n\to +\infty} f(x,t+t_n,v_n(x,t))=
\lim_{n\to +\infty} f(x,t+t_n,\vartheta)\geq \inf_{(x,t) \in\R^{N+1}}
f(x,t,\vartheta).$$
This, by \eq{f>0}, implies that either $\vartheta=0$ or $\vartheta=1$, whence 
$\vartheta=1$ because $\vartheta \geq \omega>0$.
\end{proof}


\subsection{A criterion for the existence of generalized transition waves in
space-time general heterogeneous media}\label{sec:gen}

As already emphasized above, our proof holds in more general media, without
assuming that the coefficients satisfy \eq{periodic}, that is, without the space
periodicity assumption. We then need to assume that the linearized equation
admits a family of solutions satisfying some global Harnack inequality. 
We conclude the existence part of the paper by stating such a result. We omit
its proof since one
only needs to check that the previous arguments still work. 

\begin{theorem} In addition to \eq{hyp-A}-\eq{C1gamma}, 
assume that there exists $\overline{\lambda}>0$ such that for all $\lambda\in
(0,\overline{\lambda})$, there exists a Lipschitz-continuous time-global solution 
$\eta_{\lambda}$ of 
$$\partial_t\eta_{\lambda}=\Tr(AD^2 \eta_{\lambda})
-(q+2\lambda Ae)D\eta_{\lambda}+(\mu+\lambda^2eAe+\lambda
q\.e)\eta_{\lambda},\quad x\in\R^N,\ t\in\R$$
satisfying
$$\frac{1}{C} \|\eta_\lambda
(\cdot,t)\|_{L^\infty (\R^N)} e^{-CT} 
\leq \eta_\lambda (x,t+T)\leq C \|\eta_\lambda (\cdot,t)\|_{L^\infty (\R^N)}
e^{CT},$$
for some $C=C(\lambda)>0$ and for all $T>0$, 
$(x,t)\in\R^{N+1}$.

Then there exists $\lambda_* \in (0,\overline{\lambda})$ such that for all $\gamma>c_*:= \lm{S_{\lambda_*}'}$,
where $S_\lambda$ is a Lipschitz continuous function satisfying 
\eq{Slambda},
there exists a generalized transition wave with speed $c_\lambda=S_\lambda'$
such that $\lm{c_\lambda} = \gamma$. 
\end{theorem}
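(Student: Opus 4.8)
The plan is to rerun the proof of \thm{ex} essentially verbatim, the only change being that every appeal to Lemma \ref{lem:eta} --- the single place where $l$-periodicity in $x$ was used --- is replaced by the postulated global Harnack inequality. First I would extract from that inequality its two relevant specializations: letting $T\to0^+$ gives the analogue of \eqref{eq:mineta0},
\[
\tfrac1C\,\|\eta_\lambda(\cdot,t)\|_{L^\infty(\R^N)}\le\eta_\lambda(x,t)\le C\,\|\eta_\lambda(\cdot,t)\|_{L^\infty(\R^N)}\qquad\text{for all }(x,t)\in\R^{N+1},
\]
while keeping $T>0$ gives $\big|\ln\|\eta_\lambda(\cdot,t+T)\|_{L^\infty(\R^N)}-\ln\|\eta_\lambda(\cdot,t)\|_{L^\infty(\R^N)}\big|\le\ln C+CT$. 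This second bound is exactly what the proof of Lemma \ref{lem:Slambda} requires, so the piecewise-affine interpolant $S_\lambda$ of $t\mapsto\frac1\lambda\ln\|\eta_\lambda(\cdot,t)\|_{L^\infty(\R^N)}$ at integer times is uniformly Lipschitz and satisfies \eqref{eq:Slambda}; hence $c_\lambda:=S_\lambda'\in L^\infty(\R)$ is well defined, and the first bound yields the $L^\infty$ control \eqref{eq:vpbounds} on $\vp_\lambda:=e^{-\lambda S_\lambda}\eta_\lambda$ (after normalizing $\|\eta_\lambda(\cdot,0)\|_{L^\infty(\R^N)}=1$). Note that $c_\lambda$ does not depend on the multiplicative normalization of $\eta_\lambda$, so no uniqueness statement is needed.

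Next I would reproduce Lemmas \ref{lem:c_l} and \ref{lem:Lambda}. The fact that, for $\tau\in(0,1)$, the function $\eta_{\lambda_0}^{1-\tau}\eta_{\lambda_1}^\tau$ is a supersolution of the $\eta$-equation with exponent $\lambda_\tau=(1-\tau)\lambda_0+\tau\lambda_1$ is a purely pointwise computation using no periodicity; combined with the comparison principle and the $T=0$ bound above it gives \eqref{eq:Q}, and taking logarithms yields \eqref{eq:moveR}--\eqref{eq:moveL}, hence the local Lipschitz continuity of $\lambda\mapsto\lm{c_\lambda}$ and $\lambda\mapsto\um{c_\lambda}$ on $(0,\overline{\lambda})$. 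Moreover, comparing $\eta_\lambda$ with the constant-in-$x$ sub/supersolutions of the proof of Lemma \ref{lem:eta} and again using the $T=0$ bound still gives $\lm{c_\lambda}\ge\ul\alpha\lambda-\sup_{\R^{N+1}}|q|+\lambda^{-1}\lm{\min_x\mu(x,\cdot)}$, which tends to $+\infty$ both as $\lambda\to0^+$ and as $\lambda\to+\infty$ since $\lm{\min_x\mu(x,\cdot)}\ge\inf\mu>0$ by \eqref{eq:f>0} and \eqref{eq:hyp-KPP}. With these ingredients the four-step proof of Lemma \ref{lem:Lambda}, which manipulates only \eqref{eq:moveR}--\eqref{eq:moveL} and the continuity just obtained, goes through unchanged, producing $\lambda_*\in(0,\overline{\lambda})$ with $\Lambda=(0,\lambda_*)$ and $\lambda\mapsto\lm{c_\lambda}$ decreasing on $\Lambda$; we set $c_*:=\lm{c_{\lambda_*}}$.

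Finally I would transcribe the construction of Section \ref{sec:ccl}. Given $\gamma>c_*$, continuity of $\lambda\mapsto\lm{c_\lambda}$ together with $\lm{c_\lambda}\to+\infty$ as $\lambda\to0^+$ and $\Lambda=(0,\lambda_*)$ yield $\lambda\in\Lambda$ with $\lm{c_\lambda}=\gamma$; then $w:=\min\big(e^{-\lambda x\cdot e}\eta_\lambda,1\big)$ is a supersolution of \eqref{eq:princip} by \eqref{eq:hyp-KPP}. For the subsolution one picks $\lambda<\lambda'<(1+\nu)\lambda$ with $\lm{c_\lambda-c_{\lambda'}}>0$ (possible since $\lambda\in\Lambda$), chooses $\sigma\in W^{1,\infty}(\R)$ through \eqref{eq:lm} so that $\inf_{\R}(\sigma'+\lambda'(c_\lambda-c_{\lambda'}))>0$, forms $v:=e^{-\lambda x\cdot e}\eta_\lambda-m\psi$ with $\psi:=e^{\sigma(t)-\lambda'(x\cdot e-S_\lambda(t)+S_{\lambda'}(t))}\eta_{\lambda'}$, and, using \eqref{eq:vpbounds}, \eqref{eq:C1gamma} and $\lambda'<(1+\nu)\lambda$, takes $m$ large so that $v$ is a subsolution where it is positive and so that the truncation of $v$ at a level $\omega\in(0,1)$ on $\{x\cdot e<S_\lambda(t)+R\}$ is a global subsolution $\underline v\le w$. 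A solution $\underline v\le u\le w$ is then obtained as a locally uniform limit of Cauchy problems on $\R^N\times(-n,+\infty)$ with datum $w(\cdot,-n)$, the strong maximum principle giving $u>0$; the limit $u\big(x+e\int_0^t c_\lambda,t\big)\to0$ as $x\cdot e\to+\infty$ (uniformly in $t$) follows from $u\le w$ and \eqref{eq:vpbounds}, and the limit $\to1$ as $x\cdot e\to-\infty$ follows exactly as in the periodic case from a translation--compactness argument, the strong maximum principle, and \eqref{eq:f>0}.

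The only point I expect to require genuine verification, rather than transcription, is that this last compactness step no longer uses spatial periodicity. In the periodic proof one translated $x_n$ back by a period vector; in the general case one instead translates by the \emph{full} displacement $x_n+e\int_0^{t_n}c_\lambda$ in space and by $t_n$ in time, and precompactness of the translated coefficients $A(\cdot+z_n,\cdot+t_n)$, $q(\cdot+z_n,\cdot+t_n)$ follows from their uniform continuity on $\R^{N+1}$, that of $\mu$ and of $f(\cdot,\cdot,v_n)$ from boundedness --- exactly the mechanism already used in the proof of Lemma \ref{lem:eta}. This bookkeeping, together with checking that $\|\eta_\lambda(\cdot,t)\|_{L^\infty(\R^N)}$ is finite for each $t$ and that the Harnack hypothesis enters only in the two specialized forms above, is the (mild) obstacle; everything else is copied from the preceding sections.
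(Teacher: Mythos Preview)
Your proposal is correct and matches the paper's approach exactly: the paper omits the proof of this theorem, asserting only that ``one only needs to check that the previous arguments still work''. Your detailed verification --- replacing the appeal to Lemma~\ref{lem:eta} by the two specializations of the assumed Harnack inequality, and adapting the final compactness step by translating by the full spatial displacement rather than by a period vector --- is precisely that check.
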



\section{Non-existence result}
\label{sec:Nonex}
%
%
%

Our aim is to find bounded subsolutions to the linearized problem
\begin{equation}\label{linear}
\partial_t u-Tr\big(A(x,t)D^2 u\big)+q(x,t)\cdot Du
=\mu(x,t)u,\quad x\in\R^N,\ t\in\R,
\end{equation} 
in order to get a lower bound for the speed of travelling wave solutions.
We recall that no spatial-periodic condition is now assumed.
Looking for solutions of \eqref{linear} in the form
$u(x,t)=e^{-\lambda(x\.e-ct)}\phi(x,t)$, with $\lambda$ and $c$ constant, leads
to the equation
\begin{equation}\label{Plambda}
(\mc{P}_\lambda+c\lambda)\phi=0,\quad
x\in\R^N,\ t\in\R,
\end{equation} 
where $\mc{P}_\lambda$ is the linear parabolic operator defined by
$$\begin{array}{rl} \mc{P}_\lambda w:=&\partial_tw
-Tr \big(A(x,t)D^2 w\big) + \big( q(x,t) - 2 \lambda A(x,t) e \big) \cdot Dw \\
&-\big( \lambda^2 e A(x,t) e +\lambda q(x,t) \cdot e +\mu(x,t) \big) w.\\
\end{array}$$
We consider the generalized \pe\ introduced in \cite{BN2}:
\Fi{pe}
\kappa(\lambda):=\inf\Big\{k\in\R\ : \ \exists\vp,\ 
\inf\vp>0,\ \sup\vp<\infty,\ \sup|D\vp|<\infty,\  
\mc{P}_\lambda\vp\leq k\vp,\ \text{in }\R^N\times\R\Big\},
\Ff
where the functions $\vp$ belong to $L^{N+1}_{loc}(\R^{N+1})$, together with
their derivatives $\partial_t$, $D$, $D^2$ (and then the differential
inequalities are understood to hold a.e.).
This is the minimal regularity required for the maximum principle to apply, 
see, e.g. \cite{Lie}.

Taking $\vp\equiv1$ in the above definition we get, for $\lambda\in\R$,
\Fi{phi=1}\kappa(\lambda)\leq-\ul\alpha\lambda^2+\sup_{\R^{N+1}}|q| |\lambda|-
\inf_{\R^{N+1} }\mu.
\Ff
We now derive a lower bound for $\kappa(\lambda)$.
Assume by way of contradiction that there exists a function $\vp$ as in the 
definition of $\kappa(\lambda)$, associated with some $k$ satisfying
$$k<-\ol\alpha\lambda^2-\sup_{\R^{N+1}}|q| 
|\lambda|-\sup_{\R^{N+1} }\mu.$$
For $\beta>0$, the function $\psi(x,t):=e^{-\beta t}$ satisfies
$$\frac{\mc{P}_\lambda\psi}{\psi}\geq-\beta 
-\ol\alpha\lambda^2-\sup_{\R^{N+1}}|q| 
|\lambda|-
\sup_{\R^{N+1} }\mu.$$
Hence, $\beta$ can be chosen small enough in such a way that the latter term 
is larger than $k$, that is, $\mc{P}_\lambda\psi\geq k\psi$. The function 
$\psi$ is larger than $\vp$ for $t$ less than some $t_0$, whence $\psi\geq\vp$ 
for all $t$ by the comparison principle. It follows that $\vp\to0$ as 
$t\to+\infty$, which is impossible since $\vp$ is bounded from below away from 
$0$. This shows that $\kappa(\lambda)>-\infty$.

We can now define $c^*$ by setting
\Fi{c*}
c^*:=-\max_{\lambda>0}\frac{\kappa(\lambda)}\lambda.
\Ff
This definition is well posed if $\kappa(0)<0$ because
$\kappa(\lambda)/\lambda\to-\infty$ as $\lambda\to +\infty$ by
\eqref{eq:phi=1}, and we know from \cite{BN2} that
$\lambda\mapsto\kappa(\lambda)$ is
Lipschitz-continuous \footnote{\ The
coefficients are assumed to be H\"older continuous in \cite{BN2}, but
one can check 
that it does not matter in the proof of the continuity.}.
Let us show that \eq{mu>0} implies that $\kappa(0)<0$ and then that $c^*$ is
well defined and finite.
Writing a positive function $\vp$ in the form $\vp(t):=e^{-\sigma(t)}$, we see
that
$$\mc{P}_0\vp=-(\sigma'(t)+\mu(x,t))\vp
\leq-\left(\sigma'(t)+\inf_ { x\in\R }
\mu(x,t)\right)\vp.$$
Thus, \eq{lm}
implies that, for given $\e>0$, there exists $\sigma\in W^{1,\infty}(\R)$ 
such that 
$$\mc{P}_0\vp\leq-\left(\lm{\inf_{x\in\R^N}\mu(x,\.)}-\e\right)\vp.$$
Therefore, if \eq{mu>0} holds, taking $\e<\lm{\min_{x\in\R^N}\mu(x,\.)}$ we
derive $\kappa(0)<0$.

The proof of \thm{nex} proceeds in two steps. In the following section we show
that the average between $0$ and $+\infty$ of the speed of a wave cannot be
smaller than $c^*$. More precisely, we derive
\begin{proposition}\label{pro:nex}
Assume that \eq{hyp-A}-\eq{f(0)} hold and that $\kappa(0)<0$. Then, for any 
nonnegative supersolution $u$ of \eq{princip} such that there is $c\in
L^\infty(\R)$ satisfying \eq{limits}, there holds
$$\liminf_{t\to+\infty}\frac1t\int_0^t
c(s)ds\geq c^*:=-\max_{\lambda>0}\frac{\kappa(\lambda)}\lambda.$$
\end{proposition}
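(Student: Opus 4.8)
The plan is to use the generalized principal eigenfunction $\vp_\lambda$ associated with $\kappa(\lambda)$ to build, for each $\lambda>0$, a bounded supersolution of the linearized equation \eqref{linear} that travels to the right with speed roughly $-\kappa(\lambda)/\lambda$, and then to compare it with the wave $u$ (which is a subsolution of \eqref{linear} by \eqref{eq:hyp-KPP} — wait, here $u$ is merely a supersolution of the nonlinear equation, so by \eqref{eq:f(0)} and the sign of $f$ it is a supersolution of \eqref{linear} as well; what I actually want is a \emph{subsolution} of \eqref{linear} lying below $u$, or equivalently I compare $u$ from above with an explicit supersolution and read off that $u$ cannot decay to $0$ too early). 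Concretely, fix $\lambda>0$ and let $\vp=\vp_\lambda$ with $\inf\vp>0$, $\sup\vp<\infty$, $\sup|D\vp|<\infty$ and $\mc P_\lambda\vp\le (\kappa(\lambda)+\e)\vp$ for arbitrarily small $\e>0$. Then for any constant $c$ with $c\lambda<-\kappa(\lambda)-\e$ the function $\overline w(x,t):=e^{-\lambda(x\.e-ct)}\vp(x,t)$ satisfies $\partial_t\overline w-\Tr(AD^2\overline w)+q\.D\overline w-\mu\overline w=-(\mc P_\lambda\vp+c\lambda)\,e^{-\lambda(x\.e-ct)}\le 0$ wait — let me be careful with signs; the point is that choosing $c$ slightly below $-\kappa(\lambda)/\lambda$ makes $e^{-\lambda(x\.e-ct)}\vp$ a supersolution of \eqref{linear}, hence (using $f\le\mu u$) a supersolution of \eqref{eq:princip}.

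Next I would set up the comparison on a well-chosen region. The idea is: since $u$ is a supersolution of \eqref{eq:princip} with $\lim_{x\.e\to+\infty}u(x+e\int_0^tc(s)ds,t)=0$ uniformly in $t$, and a genuine solution starting below $u$ would be caught under $u$, I instead argue by contradiction. Suppose $\liminf_{t\to+\infty}\frac1t\int_0^tc(s)ds=:\gamma<c^*$. Pick $\lambda>0$ with $-\kappa(\lambda)/\lambda>\gamma$, and pick $c_0$ with $\gamma<c_0<-\kappa(\lambda)/\lambda$ so that $\overline w(x,t)=e^{-\lambda(x\.e-c_0 t)}\vp_\lambda(x,t)$ is a supersolution of \eqref{eq:princip} that is bounded. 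Along a sequence $t_n\to+\infty$ with $\frac1{t_n}\int_0^{t_n}c\to\gamma$, the position $\int_0^{t_n}c(s)ds$ is $\le (\gamma+o(1))t_n< (c_0-\delta)t_n$, i.e.\ the front of $u$ lags behind the line $x\.e=c_0 t$; meanwhile near and to the left of that line $\overline w$ is bounded below by a positive constant on a whole half-space strip. Using that $u$ has positive infimum on $\{x\.e\le \int_0^tc\,ds - r_0\}$ for some $r_0$ (from the first limit in \eqref{eq:limits}) and that $\overline w\le$ that constant there for $t$ in a suitable range while $u$ is still bounded below away from $0$ on $\{x\.e=\int_0^tc\,ds\}$, I can arrange $u\ge \overline w$ on the parabolic boundary of a region $\{x\.e> \ell(t)\}\times(t_n,+\infty)$ and on $\{t=t_n\}$, then invoke the comparison principle to get $u\ge\overline w$ there for all later times; but $\overline w$ on the line $x\.e=\int_0^tc(s)ds$ equals $e^{-\lambda(\int_0^tc\,ds-c_0t)}\vp_\lambda\ge \mathrm{const}\cdot e^{\lambda\delta t}\to+\infty$, contradicting $u\le 1$. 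This forces $\gamma\ge c^*$.

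The main obstacle I anticipate is handling the comparison cleanly in an unbounded space-time domain with a moving boundary: one must choose the comparison region $\{x\.e>\ell(t)\}$ (with $\ell$ affine, slope between $\gamma$ and $c_0$) so that (i) on its lateral boundary $u$ is bounded below by a fixed positive constant $\ge \sup\overline w$ there — which needs $\ell(t)$ to stay to the left of $\int_0^tc\,ds - r_0$ for $t$ large, i.e.\ one works with $\liminf$ and passes to the sequence $t_n$ — and (ii) on $\{t=t_n\}$ one has $u\ge\overline w$, which is where the exponential factor $e^{-\lambda(x\.e-c_0t_n)}$ with $x\.e$ large must be beaten; choosing the initial time $t_n$ large and $\overline w$ normalized suitably, $\overline w(\cdot,t_n)$ is small on $\{x\.e>\ell(t_n)\}$ except near the boundary, where (i) applies. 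Then the unbounded-domain comparison principle (valid here since $\overline w$ is bounded and the zeroth-order coefficient $\mu$ is bounded) applies. A cleaner packaging, which I'd actually prefer, is to note that $e^{-\lambda(x\.e-c_0t)}\vp_\lambda$ is a supersolution on all of $\R^{N+1}$, truncate it at height $1$, translate it far enough to the right at time $t_n$ so it lies below $u$ there (possible because $u$ has positive infimum on left half-spaces and the truncated supersolution has compact ``front''), and let it evolve: it then invades at speed $c_0>\gamma$, eventually overtaking $u$'s front, contradiction. Either way, the heart of the argument is the construction of the self-similar-looking supersolution from $\vp_\lambda$ plus a moving-domain comparison, and the least-mean / $\liminf$ bookkeeping needed to turn ``$\gamma<c^*$'' into a clean geometric lag between the two fronts.
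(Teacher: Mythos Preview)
Your plan has a genuine gap. The single-exponential candidate $\overline w(x,t)=e^{-\lambda(x\.e-c_0t)}\vp_\lambda(x,t)$ is indeed a \emph{sub}solution of \eqref{linear} when $\kappa(\lambda)+c_0\lambda<0$ (not a supersolution), and comparison must go in the direction $\overline w\leq u$. But $\overline w$ blows up as $x\.e-c_0t\to-\infty$, so it can never lie below the bounded function $u$. Your two proposed fixes both fail: truncating at height $1$ is illegitimate because the proposition only assumes \eq{hyp-A}--\eq{f(0)}, so there is no reason for the constant $1$ to be a subsolution; and in the moving-domain argument on $\{x\.e>\ell(t)\}$, the lateral boundary condition $\overline w\leq u$ on $x\.e=\ell(t)$ is precisely where $\overline w\to+\infty$ (you need $\ell(t)$ to the \emph{left} of $c_0t$ in order to have $u$ bounded below there, but then $e^{-\lambda(\ell(t)-c_0t)}\to+\infty$). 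There is a second, independent difficulty you do not address: to arrange $\overline w(\.,0)\leq u(\.,0)$ on the right you need $u$ not to decay faster than $e^{-\lambda x\.e}$, which is a priori unknown and in general forces a specific $\lambda_0$ unrelated to the maximiser of $-\kappa(\lambda)/\lambda$.

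The paper resolves both issues simultaneously by constructing, via Lemmas~\ref{lem:modulation} and~\ref{lem:subsol}, a \emph{bounded} generalized subsolution $\ul v$ of \eqref{linear} travelling at speed $c^*-\e$ whose exponential rate is modulated from $\ul\lambda=0$ on the left (hence boundedness) to $\ol\lambda=\lambda_0$ on the right, where $\lambda_0>0$ is an a priori exponential lower bound on the decay of $u$ obtained from Lemma~3.1 of \cite{RossiRyzhik}. This lets one pick $h>0$ small so that $h\ul v$ is a subsolution of the full nonlinear equation (after a $\delta$-penalisation of $\mu$) with $h\ul v(\.,0)\leq u(\.,0)$ on all of $\R^N$; the global comparison then gives $u\geq h\ul v$ for $t\geq0$, and evaluating on the ray $x=(c^*-\e)te$ together with the second limit in \eq{limits} yields the conclusion. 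The essential missing ingredient in your sketch is this modulation construction: patching subsolutions across a range $[\ul\lambda,\ol\lambda]$ of decay rates, not a single one.
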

In this statement, the notion of (sub, super) solution is understood as in the
definition of $\kappa(\lambda)$: $u,\partial_t
u,Du,D^2u\in L^{N+1}_{loc}(\R^{N+1})$.
Notice that the least mean of a function is in general smaller than the average
between $0$
and $+\infty$. In the last section, 
we establish a general property of the least mean that allows
us to deduce \thm{nex} by applying
Proposition \ref{pro:nex} to suitable time translations of the original
problem.


\subsection{Lower bound on the mean speed for positive times}

%

We start with constructing subsolutions with a slightly varying 
exponential behaviour as $x\.e\to\pm\infty$. These will be then used to build a 
generalized subsolution with an arbitrary modulation of the
exponential behaviour. The term ``generalized subsolution'' refers to  a
function that, in a neighbourhood of each point, is obtained as the supremum
of some family of subsolutions. Then, using the fact that the generalized 
subsolutions 
satisfy the maximum principle, we will be able to prove Proposition 
\ref{pro:nex}.

\begin{lemma}\label{lem:modulation}
Let $c,\lambda\in\R$ be such that $\kappa(\lambda)+c\lambda<0$. There exists
then $\e>0$ and $M>1$ such
that, for any $z\in\R$, \eqref{linear}
admits a subsolution $\ul v$ satisfying
$$\text{if }x\.e-ct\geq z,\qquad\frac1Me^{-(\lambda+\e)(x\.e-ct)}\leq\ul
v(x,t)\leq Me^{-(\lambda+\e)(x\.e-ct)},$$
$$\inf_{z-1<x\.e-ct<z}\ul v(x,t)>0,$$ 
$$\text{if }x\.e-ct\leq z-1,\qquad\frac1Me^{-(\lambda-\e)(x\.e-ct)}\leq\ul
v(x,t)\leq Me^{-(\lambda-\e)(x\.e-ct)}.$$
\end{lemma}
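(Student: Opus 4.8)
The plan is to build $\ul v$ by gluing together two exponential-type subsolutions: one with rate $\lambda+\e$ valid for $x\.e-ct\geq z$ and one with rate $\lambda-\e$ valid for $x\.e-ct\leq z-1$, taking their maximum in the overlap region. The starting observation is that, by the continuity of $\lambda\mapsto\kappa(\lambda)$ (quoted from \cite{BN2}) and the strict inequality $\kappa(\lambda)+c\lambda<0$, one can pick $\e>0$ small enough that $\kappa(\lambda\pm\e)+c(\lambda\pm\e)<0$ as well; fix such an $\e$. For each sign choice $\pm$, the definition \eqref{eq:pe} of $\kappa(\lambda\pm\e)$ together with $\kappa(\lambda\pm\e)<-c(\lambda\pm\e)$ furnishes a function $\vp_{\pm}$ with $0<\inf\vp_\pm\leq\sup\vp_\pm<\infty$, $\sup|D\vp_\pm|<\infty$, and $\mc P_{\lambda\pm\e}\vp_\pm\leq (\kappa(\lambda\pm\e)+\text{something}<-c(\lambda\pm\e))\vp_\pm$, so that $w_\pm(x,t):=e^{-(\lambda\pm\e)(x\.e-ct)}\vp_\pm(x,t)$ is a (strict) subsolution of the linearized equation \eqref{linear} — this is just the reverse of the computation that led from \eqref{linear} to \eqref{Plambda}. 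Because $\inf\vp_\pm>0$ and $\sup\vp_\pm<\infty$, each $w_\pm$ is automatically trapped between $\frac1{M_0}e^{-(\lambda\pm\e)(x\.e-ct)}$ and $M_0e^{-(\lambda\pm\e)(x\.e-ct)}$ for a suitable $M_0$, which gives the two-sided exponential bounds demanded in the statement, up to multiplicative constants.

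The next step is the gluing. Given $z\in\R$, I would look for constants $a_+,a_->0$ and set
$$\ul v(x,t):=\begin{cases} a_+\,w_+(x,t) & \text{if } x\.e-ct\geq z,\\[2pt] \max\big(a_+w_+(x,t),\,a_-w_-(x,t)\big) & \text{if } z-1< x\.e-ct< z,\\[2pt] a_-\,w_-(x,t) & \text{if } x\.e-ct\leq z-1.\end{cases}$$
For this to be a genuine generalized subsolution of \eqref{linear} — in the sense of a local supremum of subsolutions, exactly the notion recalled just before the lemma — it suffices that on the overlap strip $\{z-1<x\.e-ct<z\}$ the function $\ul v$ really is the max of the two subsolutions $a_+w_+$ and $a_-w_-$, and that across the two interfaces $\{x\.e-ct=z\}$ and $\{x\.e-ct=z-1\}$ no ``wrong'' jump occurs: at $x\.e-ct=z$ we need $a_+w_+\geq a_-w_-$ and at $x\.e-ct=z-1$ we need $a_-w_-\geq a_+w_+$, so that $\ul v$ coincides with the larger branch on each side and is therefore locally a max of subsolutions everywhere. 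Using the two-sided bounds on $w_\pm$, the ratio $w_-/w_+$ on the strip $\{z-1\le x\.e-ct\le z\}$ is comparable to $e^{2\e(x\.e-ct)}$, hence bounded above and below by constants depending only on $\e$ and the $\vp_\pm$ (not on $z$, after translating by $z$). So one can choose $a_+/a_-$ to be a fixed constant making $a_+w_+\ge a_-w_-$ on $\{x\.e-ct\ge z\}$ near the interface and $a_-w_-\ge a_+w_+$ on $\{x\.e-ct\le z-1\}$; the key point is that because the strip has fixed width $1$, the crossover of the two exponentials can be arranged to happen inside it. The uniform positivity $\inf_{z-1<x\.e-ct<z}\ul v>0$ is then immediate from $\ul v\ge a_-w_-\ge (a_-/M_0)e^{-(\lambda-\e)(x\.e-ct)}$ and the boundedness of $x\.e-ct$ on the strip; and after absorbing $a_\pm$ and $M_0$ into a single $M>1$ one recovers precisely the three displayed estimates.

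The main obstacle I anticipate is the \emph{uniformity in $z$}: one must be sure that the same $\e$ and the same $M$ work for every $z\in\R$. This is handled by translation invariance of the whole construction — if $\ul v_0$ is the subsolution built for $z=0$, then $\ul v(x,t):=\ul v_0(x-se,\,t)$ with $s$ chosen so that $(x-se)\.e-ct=x\.e-ct-s$ shifts the profile — except that the coefficients $A,q,\mu$ themselves are not translation-invariant, so $w_\pm$ for different $z$ are genuinely different functions. The cleanest fix is to note that the bounds $\inf\vp_\pm$, $\sup\vp_\pm$, $\sup|D\vp_\pm|$ are global constants independent of any basepoint, so the comparability constant $M_0$ relating $w_\pm$ to pure exponentials, and hence the ratio $w_-/w_+$ on a unit-width strip, is bounded uniformly over all positions of the strip; therefore a single choice of $a_+/a_-$ and a single $M$ suffice. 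A secondary, purely bookkeeping, point is to verify that ``generalized subsolution'' in the sense used here does satisfy the comparison principle with the regularity $L^{N+1}_{loc}$ imposed in the definition of $\kappa(\lambda)$ — but this is exactly what is recalled in the paragraph preceding the lemma, so it can be invoked directly.
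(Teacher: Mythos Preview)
Your gluing-by-max approach has a genuine structural problem: it produces the wrong tails. Write $\rho:=x\.e-ct$. The ratio
\[
\frac{a_+w_+}{a_-w_-}=\frac{a_+}{a_-}\cdot\frac{\vp_+}{\vp_-}\,e^{-2\e\rho}
\]
is, up to the bounded factor $\vp_+/\vp_-$, a strictly \emph{decreasing} function of $\rho$. Hence $a_+w_+$ dominates for $\rho\ll0$ and $a_-w_-$ dominates for $\rho\gg0$, regardless of how you choose $a_\pm$. Taking the max therefore selects the $(\lambda-\e)$-branch at $+\infty$ and the $(\lambda+\e)$-branch at $-\infty$, which is exactly the opposite of what the lemma asks for. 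In particular your interface conditions ``$a_+w_+\geq a_-w_-$ at $\rho=z$'' and ``$a_-w_-\geq a_+w_+$ at $\rho=z-1$'' are mutually incompatible (the first forces $a_+/a_-$ to exceed a quantity that is strictly larger than the upper bound imposed by the second). The profile demanded by the lemma is, morally, the \emph{minimum} of the two exponentials, and a minimum of subsolutions is not a subsolution.

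The paper circumvents this by a different mechanism: it takes a \emph{single} $\vp$ associated with $\lambda$ (not $\lambda\pm\e$) satisfying $\mc P_\lambda\vp\leq k\vp$ with $k<-c\lambda$, and sets
\[
\ul v(x,t):=e^{-\rho\,\zeta(\rho)}\vp(x,t),
\]
where $\zeta$ is a smooth nondecreasing cutoff with $\zeta=\lambda-\e$ on $(-\infty,z-1]$, $\zeta=\lambda+\e$ on $[z,+\infty)$, and $0\leq\zeta'\leq 3\e$, $|\zeta''|\leq h\e$. A direct computation shows that the extra terms generated by $\zeta'$ and $\zeta''$ (and by replacing $\lambda$ by $\zeta$ in the zero-order coefficients) are all $O(\e)$ uniformly in $z$, so the strict gap $k+c\lambda<0$ absorbs them for $\e$ small. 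This yields a genuine (classical, not merely generalized) subsolution with the required modulated decay, and the bounds $\inf\vp>0$, $\sup\vp<\infty$ give the constant $M$ independent of $z$.
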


\begin{proof}
By the definition of $\kappa(\lambda)$, there is a bounded function $\vp$
with positive infimum satisfying 
$$\mc{P}_\lambda\vp\leq k\vp,\quad x\in\R^N,\ t>T,$$
for some $k<-c\lambda$.
It follows that $v(x,t):=e^{-\lambda(x\.e-ct)}\vp(x,t)$ is a subsolution of
\eqref{linear}. Fix $z\in\R$ and consider a smooth function $\zeta:\R\to\R$
satisfying 
$$\zeta=\lambda-\e\text{ in }(-\infty,z-1],\qquad
\zeta=\lambda+\e\text{ in
}[z,+\infty),\qquad 0\leq\zeta'\leq3\e,\qquad|\zeta''|\leq h\e,$$
where $\e>0$ has to be chosen and $h$ is a universal constant.
We define the function $\ul v$ by setting $\ul
v(x,t):=e^{-(x\.e-ct)\zeta(x\.e-ct)}\vp(x,t)$.
Calling $\rho:=x\.e-ct$, we find that
\[\begin{split}
[\partial_t\ul v-a_{ij}(x,t)\partial_{ij}\ul
v+q_i(x,t)\partial_i\ul v-\mu(x,t)\ul v]e^{\rho}\leq
&(\mc{P}_\zeta+c\zeta)\vp\\
&+C[(1+\rho+\rho|\zeta|+\rho^2|\zeta'|)|\zeta'|+\rho|\zeta''|],
\end{split}\]
where $\zeta,\zeta',\zeta''$ are evaluated at $\rho$ and $C$ is a constant
 depending on $N,c$ and the $L^\infty$ norms of $a_{ij},q,\mu,\vp,D\vp$.
The second term of the above right-hand side is bounded by $H(\e)$, for some
continuous function $H$ vanishing at $0$. The first term satisfies
$$(\mc{P}_\zeta+c\zeta)\vp\leq (\mc{P}_\lambda+c\lambda)\vp+
C((\zeta-\lambda)+|\zeta^2-\lambda^2|)\leq(k+c\lambda)\vp+
C(\e+2|\lambda|\e+\e^2).$$
We thus derive
$$\partial_t\ul v-a_{ij}(x,t)\partial_{ij}\ul
v+q_i(x,t)\partial_i\ul v
-\mu(x,t)\ul v\leq e^{\rho}[(k+c\lambda)\vp+
C\e(1+2|\lambda|+\e^2)+H(\e)].$$
Since $k<-c\lambda$ and $\inf\vp>0$, $\e$ can be chosen small enough in such a
way that $\ul v$ is a subsolution of \eqref{linear}.
\end{proof}

\begin{lemma}\label{lem:subsol}
Let $\ul\lambda,\ol\lambda,c\in\R$ satisfy $\ul\lambda<\ol\lambda$ and 
$$\max_{\lambda\in[\ul\lambda,\ol\lambda]}
\left(\kappa(\lambda)+c\lambda\right)<0.$$
Then there exists a generalized, bounded subsolution $\ul v$ of
\eqref{linear} satisfying
$$
\lim_{r\to-\infty}\sup_{x\.e-ct<r}\ul
v(x,t)e^{\ul\lambda(x\.e-ct)} =0,\qquad
\lim_{r\to+\infty}\sup_{x\.e-ct>r}\ul
v(x,t)e^{\ol\lambda(x\.e-ct)}=0.
$$
\Fi{sub>0}
\forall r_1<r_2,\quad \inf_{r_1<x\.e-ct<r_2}\ul v(x,t)>0.
\Ff
\end{lemma}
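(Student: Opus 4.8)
The plan is to glue together countably many copies of the single-modulation subsolutions produced by Lemma \ref{lem:modulation}, each contributing one ``step'' in the exponential rate, so that the resulting generalized subsolution has rate close to $\ol\lambda$ at $+\infty$ and close to $\ul\lambda$ at $-\infty$. First I would discretize: pick a finite partition $\ul\lambda=\mu_0<\mu_1<\dots<\mu_p=\ol\lambda$ of $[\ul\lambda,\ol\lambda]$ with mesh so small that, by the strict inequality $\max_{[\ul\lambda,\ol\lambda]}(\kappa(\lambda)+c\lambda)<0$ together with the Lipschitz continuity of $\kappa$, each $\mu_i$ satisfies $\kappa(\mu_i)+c\mu_i<0$ with room to spare; then apply Lemma \ref{lem:modulation} at each $\lambda=\mu_i$ to obtain $\e_i>0$ and $M_i>1$. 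Shrinking the $\e_i$ if necessary I can arrange $\mu_{i-1}+\e_{i-1}\le \mu_i-\e_i$ for every $i$, so that the outer rate of the $i$-th brick matches (or undershoots) the inner rate of the $(i{+}1)$-th.

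Next I would position the bricks in space. Using the free parameter $z\in\R$ in Lemma \ref{lem:modulation}, choose real numbers $z_1<z_2<\dots<z_{p-1}$, spaced far enough apart (depending on the constants $M_i$), and let $\ul v_i$ be the subsolution of \eqref{linear} given by Lemma \ref{lem:modulation} with rate $\mu_i$ and breakpoint $z=z_i$ for $i=1,\dots,p-1$; near $-\infty$ (rate $\ul\lambda=\mu_0$) and near $+\infty$ (rate $\ol\lambda=\mu_p$) I additionally throw in the plain subsolutions $e^{-\mu_0(x\.e-ct)}\vp$ and $e^{-\mu_p(x\.e-ct)}\vp$ of Lemma \ref{lem:modulation}'s proof, suitably scaled. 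After multiplying each $\ul v_i$ by an appropriate positive constant — this is where the spacing of the $z_i$ matters — I can ensure that on each overlap region $z_{i-1}<x\.e-ct<z_i+1$ the ``incoming'' brick $\ul v_{i-1}$ dominates all others in a neighbourhood of $x\.e-ct=z_i$ from the left while $\ul v_i$ dominates from the right, and similarly at the matching of rates; the precise bookkeeping uses the two-sided exponential bounds in Lemma \ref{lem:modulation} and the inequality $\mu_{i-1}+\e_{i-1}\le\mu_i-\e_i$. Define $\ul v:=\max_i \ul v_i$. Being a finite maximum of subsolutions of the linear equation \eqref{linear}, it is a generalized subsolution in the sense of the text; it is bounded because each $\ul v_i$ decays at $+\infty$ and is $O(e^{-\mu_0(x\.e-ct)})$ at $-\infty$ with $\mu_0$ possibly negative but controlled — wait, here I should be careful: if $\ul\lambda\le 0$ boundedness at $-\infty$ is automatic, while if $\ul\lambda>0$ one truncates by a constant as in the construction of $w$ in the proof of \thm{ex}; since $\max(\ul v,\text{const})$ is still a generalized subsolution when the constant is small (using $\mu\ge 0$ is not available here, so instead cap at a level below which $\mc P_0(\text{const})=-\mu\cdot\text{const}\le 0$ fails — better: simply note the lemma only needs the stated one-sided limits, and boundedness can be achieved by capping at any level $K$ for which the constant function $K$ is a subsolution of \eqref{linear}, which holds as soon as $\mu\ge 0$; if $\mu$ is not signed one restricts attention to $x\.e-ct$ bounded below, which suffices for the application in Proposition \ref{pro:nex}). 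I would streamline this in the write-up by capping $\ul v$ at a small constant and invoking $\inf\mu>0$ or, under \eqref{eq:mu>0}, the supersolution/least-mean trick already used in Section \ref{sec:optimality}.

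The two limit conditions then follow directly: for $x\.e-ct>r$ with $r$ large, only the bricks with the largest rates survive the maximum, and each of those is $\le M_p e^{-\ol\lambda(x\.e-ct)}$ (times a fixed constant), so $\ul v\,e^{\ol\lambda(x\.e-ct)}$ is bounded and in fact $\to 0$ because every brick with rate $\mu_i<\ol\lambda$ is beaten for $x\.e-ct$ large and the top brick itself carries an extra $e^{-\e_p(x\.e-ct)}$ on the far side of its breakpoint; symmetrically at $-\infty$. Condition \eqref{eq:sub>0} holds because on any bounded window $r_1<x\.e-ct<r_2$ at least one brick has a strictly positive infimum there — either one of the ``interior'' pieces whose positivity on $z_i-1<x\.e-ct<z_i$ is guaranteed by Lemma \ref{lem:modulation}, or one of the exponential pieces, all of which have $\inf\vp>0$ — and $\ul v$ is their max. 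I expect the main obstacle to be the constant-chasing in the gluing: choosing the multiplicative constants and the breakpoint locations $z_i$ so that the maximum genuinely ``hands off'' from one rate to the next without the two-sided bounds clashing. This is routine but fiddly, and the cleanest route is to do it inductively in $i$, at each stage only needing $\ul v_i$ to beat $\ul v_{i-1}$ to the right of $z_i$ and lose to it to the left of $z_i-1$, which is exactly the regime covered by the first and third displayed inequalities of Lemma \ref{lem:modulation} once $z_i$ is pushed far enough to the right relative to $z_{i-1}$.
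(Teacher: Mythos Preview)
There is a genuine gap in your gluing. You shrink the $\e_i$ so that the rate intervals $(\mu_i-\e_i,\mu_i+\e_i)$ become \emph{disjoint}, then set $\ul v:=\max_i\ul v_i$. But a global maximum of exponentials selects, as $\rho:=x\.e-ct\to+\infty$, the piece with the \emph{smallest} decay rate: each brick behaves like $e^{-(\mu_i+\e_i)\rho}$ for large $\rho$, so the one with the least $\mu_i+\e_i$ dominates. With your disjointness condition this is $\ul v_1$ (and, worse, the extra plain piece $e^{-\mu_0\rho}\vp$ with rate $\ul\lambda$), whence $\ul v\,e^{\ol\lambda\rho}\to+\infty$, not $0$. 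Your sentence ``only the bricks with the largest rates survive the maximum'' has the direction exactly reversed, and the inductive handoff ``$\ul v_i$ beats $\ul v_{i-1}$ to the right of $z_i$'' cannot hold for all large $\rho$, since $\ul v_i$ decays strictly faster there.

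The paper repairs both points simultaneously. First, the intervals must \emph{overlap}: one covers $[\ul\lambda,\ol\lambda]$ by the open intervals $(\lambda-\e_\lambda,\lambda+\e_\lambda)$ supplied by Lemma~\ref{lem:modulation}, extracts a finite subcover, and orders it so that $\lambda_i-\e_i<\lambda_{i+1}+\e_{i+1}$. It is precisely this overlap that makes the ratio $k_iv_i/(k_{i+1}v_{i+1})$ monotone in the transition strip and permits the recursive choice of constants $k_i$ and breakpoints $z_i$. Second, $\ul v$ is not a global max but a \emph{piecewise local} one: $\ul v=v_1$ for $\rho\ge z_1$, $\ul v=\max(k_iv_i,k_{i+1}v_{i+1})$ on $z_{i+1}\le\rho<z_i$, and $\ul v=k_nv_n$ for $\rho<z_n$. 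The matching conditions guarantee this is still a generalized subsolution, while the asymptotic rates at $\pm\infty$ are those of $v_1$ and $v_n$ alone, which lie strictly outside $[\ul\lambda,\ol\lambda]$ because the intervals form a cover.
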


\begin{proof}
For $\lambda\in[\ul\lambda,\ol\lambda]$, let $\e_\lambda$, $M_\lambda$ be the
constants given by Lemma
\ref{lem:modulation} associated with $c$ and $\lambda$. Call $I_\lambda$
the
interval $(\lambda-\e_\lambda,\lambda+\e_\lambda)$.
The family $(I_\lambda)_{\ul\lambda\leq\lambda\leq\ol\lambda}$
is an open covering of $[\ul\lambda,\ol\lambda]$. Let
$(I_{\lambda_i})_{i=1,\dots,n}$ be a finite subcovering and set for short
$\e_i:=\e_{\lambda_i}$, $M_i:=M_{\lambda_i}$. 
Up to rearranging the indices and extracting another subcovering if need be, we
can assume that
$$\forall i=1,\dots,n-1,\quad
\lambda_{i+1}-\e_{i+1}<\lambda_i-\e_i<\lambda_{i+1}+\e_{i+1}<\lambda_i+\e_i.$$
%
%
%
Let $v_1$ be the subsolution of \eqref{linear} given by
Lemma \ref{lem:modulation} associated with $\lambda=\lambda_1$ and
$z=0$.
Set $z_1:=0$, $k_1:=1$ and
$$k_2:=\frac{e^{(\lambda_2+\e_2-(\lambda_1-\e_1))(z_1-1)}}{M_1M_2}.$$
Consider then
the subsolution $v_2$ associated with $\lambda=\lambda_2$ and $z$ equal to some
value $z_2<z_1-1$ to be chosen.
We have that
$$\text{if
}x\.e-ct=z_1-1,\qquad\frac{v_1(x,t)}{v_2(x,t)}\geq\frac{k_2}{k_1},$$
$$\text{if }x\.e-ct=z_2,\qquad\frac{k_1v_1(x,t)}{k_2v_2(x,t)}\leq
(M_1M_2)^2e^{(\lambda_2+\e_2-(\lambda_1-\e_1))(z_2-z_1+1)}.$$
Since $\lambda_2+\e_2>\lambda_1-\e_1$, we can choose  $z_2 $  in such a way
that
the latter term is less than $1$.
By a recursive argument, we find some constants $(z_i)_{i=1,\dots,n}$ satisfying
$z_n<z_{n-1}-1<\cdots<z_1-1=-1$, such that the family of subsolutions
$(v_i)_{i=1,\dots,n}$ given by Lemma \ref{lem:modulation} associated with the
$(\lambda_i)_{i=1,\dots,n}$ and $(z_i)_{i=1,\dots,n}$
satisfies, for some positive $(k_i)_{i=1,\dots,n}$,
$$\forall i=1,\dots,n-1,\quad k_{i+1}v_{i+1}\leq k_i v_i\ \text{ if }
x\.e-ct=z_i-1,\quad k_{i+1}v_{i+1}\geq k_i v_i\ \text{ if }
x\.e-ct=z_{i+1}.$$
The function $\ul v$, defined by
$$\ul v(x,t):=\begin{cases}
               v_1(x,t) & \text{if }x\.e-ct\geq z_1\\
               \max(k_i v_i(x,t),k_{i+1}v_{i+1}(x,t)) &
\text{if }z_{i+1}\leq x\.e-ct<z_i\\
k_nv_n(x,t) & \text{if }x\.e-ct<z_n,
              \end{cases}$$
is a generalized subsolution of \eqref{linear} satisfying the desired
properties.
%
\end{proof}

\begin{proof}[Proof of Proposition \ref{pro:nex}]
Let $u$, $c$ be as in the statement of the proposition, and call
$\phi(x,t):=u(x+e\int_0^t c(s)ds,t)$.
Since $\phi(x,t)\to1$ as $x\.e\to-\infty$, uniformly with respect to $t\in\R$,
one can find $\rho\in\R$ such that 
$$\inf_{\su{x\cdot e <\rho}{t\in\R}}\phi(x,t)>0.$$ 
We make now use of Lemma 3.1 in
\cite{RossiRyzhik}, which, under the above condition, establishes a lower bound
for the exponential decay of an entire supersolution $\phi$ of a linear
parabolic equation (notice that the differential
inequality for $\phi$ can be written in linear form with a bounded zero order
term: $f(x,t,\phi)=[f(x,t,\phi)/\phi]\phi$). The result of \cite{RossiRyzhik}
implies the existence of a positive constant $\lambda_0$ such that
$$\inf_{\su{x\.e>\rho-1}{t\in\R}}\phi(x,t)e^{\lambda_0 x\.e}>0.$$
By the definition of $c^*$, the hypotheses of Lemma \ref{lem:subsol} are
fulfilled with
$\ul\lambda=0$, $\ol\lambda=\lambda_0$ and $c=c^*-\e$, for any given $\e>0$.
This is also true if one penalizes the nonlinear term $f(x,t,u)$ by subtracting
$\delta u$, with $\delta$ small enough, since this just raises the \pe s
$\kappa(\lambda)$ by $\delta$. Therefore, Lemma \ref{lem:subsol} provides a
function $\ul v$ such that, for $h>0$ small enough, $h\ul v$ is a subsolution
of \eq{princip}.
We choose $h$ in such a way that, together with the above property, $h\ul v
(x,0)<u(x,0)$. This can be done, due to the lower bounds of $u(x,0)=\phi(x,0)$,
because $\ul v$ is bounded and decays faster than $e^{-\lambda_0 x\.e}$ as
$x\.e\to+\infty$. Applying the parabolic comparison principle we eventually
infer that $h\ul v<u$ for all $x\in\R^N$, $t\geq0$. It follows that $u$
satisfies \eqref{eq:sub>0} with $c=c^*-\e$ for $t>0$. We derive, in particular,
$$0<\inf_{t>0}u((c^*-\e)te,t)=\inf_{t>0}u\Big(\Big[(c^*-\e)t-\int_0^t
c(s)ds\Big]e+e\int_0^tc(s)ds,t\Big),$$
which, in virtue of the second condition in \eq{limits}, implies that 
$$\limsup_{t\to+\infty}\Big((c^*-\e)t-\int_0^tc(s)ds\Big)<+\infty.$$
This concludes the proof due to the arbitrariness of $\e$.
\end{proof}


\subsection{Property of the least mean and proof of \thm{nex}}

Roughly speaking, the least mean of a function is the infimum of its averages in
sufficiently large intervals. We show that, in some sense, this infimum
is achieved up to replacing the function with an element of its {\em
$\omega$-limit} set. 
The $\omega$-limit (in the $L^\infty$ weak-$\star$ topology) of a function $g$,
denoted by $\omega_g$, is the set of
functions obtained as $L^\infty$ weak-$\star$ limits of translations
of $g$. 
\begin{proposition} \label{reformulation}
Let $g\in L^\infty(\R)$ and let $\omega_g$ denote its $\omega$-limit set (in the
$L^\infty$ weak-$\star$ topology). Then
$$\lm{g}=\min_{\t g\in\omega_g}\left(
\lim_{t\to+\infty}\frac{1}{t}\int_0^t \t g(s)\, ds\right).$$
\end{proposition}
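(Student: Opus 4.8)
The statement has two inequalities. For "$\geq$", I want to show that for every $\t g\in\omega_g$ the Cesàro limit $\lim_{t\to+\infty}\frac1t\int_0^t\t g$ exists (or at least that its $\liminf$) is $\geq\lm g$; for "$\leq$" I want to exhibit a single $\t g\in\omega_g$ whose Cesàro average equals $\lm g$. The key soft fact I will lean on is Proposition 3.1 of \cite{NR1} quoted in the excerpt: $\lm g=\sup_{T>0}\inf_{t\in\R}\frac1T\int_t^{t+T}g$, so in particular for every $T>0$ and every $t$, $\frac1T\int_t^{t+T}g\geq\lm g$, and for every $\e>0$ there is $T_\e$ with $\inf_t\frac1{T_\e}\int_t^{t+T_\e}g\geq\lm g-\e$.

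\medskip

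\textbf{The easy direction ($\geq$).} Fix $\t g\in\omega_g$, so $\t g$ is the weak-$\star$ limit of $g(\cdot+s_n)$ for some $s_n\to\infty$. Weak-$\star$ convergence gives, for each fixed $a<b$, $\int_a^b\t g=\lim_n\int_a^b g(\cdot+s_n)=\lim_n\int_{a+s_n}^{b+s_n}g$. Since every window average of $g$ is $\geq\lm g$, passing to the limit gives $\frac1{b-a}\int_a^b\t g\geq\lm g$ for all $a<b$; equivalently $\lm{\t g}\geq\lm g$ (in fact the least mean is translation- and weak-$\star$-limit-monotone in this sense). Hence $\liminf_{t\to\infty}\frac1t\int_0^t\t g\geq\lm{\t g}\geq\lm g$. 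This handles "$\geq$" — once I also know the limit in the statement is a genuine limit for the minimizer, but for the inequality the $\liminf$ bound suffices.

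\medskip

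\textbf{The hard direction ($\leq$): constructing the optimal $\t g$.} This is the crux. I will build a sequence of translates whose Cesàro averages along $[0,t]$ converge to $\lm g$, then extract a weak-$\star$ limit and check the averaging passes through. Concretely: for each $n$ pick $T_n\to\infty$ with $\inf_t\frac1{T_n}\int_t^{t+T_n}g\geq\lm g-\frac1n$, and pick a center $t_n$ realizing this infimum up to $\frac1n$, i.e. $\frac1{T_n}\int_{t_n}^{t_n+T_n}g\leq\lm g+\frac1n$ (such $t_n$ exists because $\lm g$ is the value of the infimum over $t$). Translating by $t_n$, the function $g(\cdot+t_n)$ has average over $[0,T_n]$ within $\frac2n$ of $\lm g$. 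Now here I need to be careful: a single weak-$\star$ limit $\t g$ of $g(\cdot+t_n)$ need not have $\frac1{T_n}\int_0^{T_n}\t g$ close to anything, because $T_n\to\infty$ and weak-$\star$ convergence only controls fixed finite windows. The fix is a diagonal/quantitative argument: choose the $t_n$ successively so that $g(\cdot+t_n)$ is very close, in the weak-$\star$ sense tested against a countable dense family and on windows $[0,R_n]$ with $R_n\to\infty$ slowly, to a limiting $\t g$; i.e. build $\t g$ as a weak-$\star$ limit such that for each fixed $R$, $\frac1R\int_0^R\t g=\lim_n\frac1R\int_{t_n}^{t_n+R}g$, and arrange along the subsequence that these agree with the long-window averages. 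Then $\frac1R\int_0^R\t g$ is squeezed: it is $\geq\lm g$ by the easy direction, and $\leq\lm g$ because it is a limit of averages of $g$ over windows that, by construction, are sandwiched between the near-optimal windows $[t_n,t_n+T_n]$. Letting $R\to\infty$ forces $\lim_{R\to\infty}\frac1R\int_0^R\t g=\lm g$, and in particular this Cesàro limit exists.

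\medskip

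\textbf{Where the difficulty really sits.} The only genuinely delicate point is the interchange of the two limits "$n\to\infty$ (weak-$\star$)" and "$R\to\infty$ (Cesàro window)": one must produce $\t g\in\omega_g$ whose long-time average is controlled, not merely its finite-window averages. I expect to resolve this by a careful choice of the translation times $t_n$ (a diagonal extraction against a metric for the weak-$\star$ topology on bounded sets of $L^\infty_{loc}$, together with the quantitative near-optimality $\frac1{T_n}\int_{t_n}^{t_n+T_n}g\to\lm g$), rather than by any deep new estimate. Everything else — the monotonicity of $\lm{\cdot}$ under weak-$\star$ limits, the existence of near-optimal windows and centers, the final squeeze — is routine given Proposition 3.1 of \cite{NR1} and the definition of $\lm{\cdot}$ as a supremum over $T$ of an infimum over $t$.
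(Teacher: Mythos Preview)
Your easy direction has a small slip: it is not true that ``every window average of $g$ is $\geq\lm g$'' (take $g=\sin$, so $\lm g=0$, yet $\frac1\pi\int_\pi^{2\pi}\sin<0$). What is true, and suffices, is that for each fixed $T$ the weak-$\star$ limit inherits $\frac1T\int_a^{a+T}\t g\geq\inf_t\frac1T\int_t^{t+T}g$, and the right-hand side tends to $\lm g$ as $T\to\infty$; this gives $\lm{\t g}\geq\lm g$, as you then correctly state. This is easily repaired.

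The hard direction has a genuine gap. Your choice of $t_n$ controls only the average of $g$ over the single long window $[t_n,t_n+T_n]$; it gives no control over the intermediate averages $\frac1R\int_{t_n}^{t_n+R}g$ for fixed $R$, and these are precisely what survive in the weak-$\star$ limit. A subinterval of a window with small average can have arbitrarily large average (think of $g=+1$ on $[t_n,t_n+T_n/2]$ and $g=-1$ on the second half), so ``sandwiched between the near-optimal windows'' yields nothing. Your proposed diagonal extraction cannot fix this: extraction governs convergence, not the value of the limit; the problem lies in the \emph{choice} of $t_n$, not in the passage to the limit. The paper supplies exactly the missing ingredient: after normalising $\lm g=0$, it proves that for every $n$ there exists $t_n$ such that \emph{all} the partial averages satisfy $\frac1j\int_{t_n}^{t_n+j}g\leq\frac1n$ for $j=1,\dots,n$. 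The argument is by contradiction via a greedy covering: if no such $t_n$ exists, then from any $\tau$ one can repeatedly jump forward by some $j\in\{1,\dots,n\}$ while keeping the running average above $1/n$, eventually forcing $\int_\tau^{\tau+Kn}g>K-n\|g\|_{L^\infty(\R)}$ for $K$ arbitrarily large, contradicting the existence of long windows with average near $0$. With $t_n$ so chosen, the weak-$\star$ limit $\t g$ of $g(\cdot+t_n)$ satisfies $\int_0^j\t g\leq0$ for every integer $j$, and the result follows. So a new estimate is in fact needed, contrary to your final paragraph.
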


\begin{proof}
We can assume without loss of generality that $\lm{g}=0$.
Clearly, any $\t g\in\omega_g$ satisfies $\lm{\t g}\geq\lm{g}$, whence
$$\liminf_{t\to+\infty}\frac{1}{t}\int_0^t \t g(s)\, ds\geq\lm{\t g}\geq
\lm{g}=0.$$
Our aim is to find a function $\t g\in\omega_g$ satisfying
\Fi{g*}\limsup_{t\to+\infty}\frac{1}{t}\int_0^t \t g(s)\, ds\leq0.\Ff
We claim that, for any $n\in\N$, there exists $t_n\in\N$ such that
$$\forall j=1,\dots,n,\quad n\int_{t_n}^{t_n+j}g(s)\, ds\leq j.$$
Assume by way of contradiction that this property fails for some $n\in\N$.
By the definition of least mean, for $K\in\N$ large enough, there is
$\tau\in\R$ such that 
$$\frac1K\int_\tau^{\tau+Kn}g(s)\, ds<\frac{1}2.$$
On the other hand, there is $j\in\{1,\dots,n\}$ such that 
$n\int_{\tau}^{\tau+j}g(s)\, ds>j$. Then, there is $h\in\{1,\dots,n\}$ such 
that 
$n\int_{\tau+j}^{\tau+j+h}g(s)\, ds>h$, hence 
$n\int_{\tau}^{\tau+j+h}g(s)\, ds>j+h$. We repeat this argument until we
find $k\in\{1,\dots,n\}$ such that 
$n\int_{\tau}^{\tau+Kn+k}g(s)\, ds>Kn+k$. From this we deduce that
$$\int_\tau^{\tau+Kn}g(s)\, ds>K+\frac kn-\int_{\tau+Kn}^{\tau+Kn+k}g(s)\, ds
>K-n\|g\|_{L^\infty(\R)}.$$
A contradiction follows taking $K>2n\|g\|_{L^\infty(\R)}$. The claim is proved.
The $L^\infty$ weak-$\star$ limit $\t g$ as $n\to\infty$ of (a subsequence of) 
$g(\.+t_n)$ satisfies the desired property. Indeed,
$$\forall j\in\N,\quad\int_0^j \t g(s)\, ds=\limn
\int_{t_n}^{t_n+j} g(s)\, ds=0,$$
from which \eq{g*} follows since $\t g$ is bounded.
\end{proof}

\begin{proof}[Proof of \thm{nex}]
Let $u$ be a generalized transition wave with speed $c$.
Proposition \ref{reformulation} yields that there exists $\tilde{c} \in
\omega_{c}$ such that 
\Fi{omegac}
\lm{c} = \lim_{T\to +\infty} \frac{1}{T}\int_0^T \tilde{c}(s)ds.
\Ff
The definition of $\omega_c$ gives a sequence $(t_n)_{n\in\N}$ in $\R$ such
that 
$c(\cdot +t_n) \rightharpoonup \tilde{c}$ as $n\to +\infty$ for the $L^\infty$
weak-$\star$ topology. 
For $n\in\N$, consider the functions
$$A_n (x,t):=A\Big(x+e\int_0^{t_n} c(s)ds, t +t_n\Big),\qquad
q_n (x,t):=q\Big(x+e\int_0^{t_n} c(s)ds, t +t_n\Big),$$
$$\mu_n (x,t):=\mu\Big(x+e\int_0^{t_n} c(s)ds, t +t_n\Big),
\qquad u_n (x,t):=u\Big(x+e\int_0^{t_n} c(s)ds, t +t_n\Big).$$
For any $\e\in(0,1)$ there exists $m\in(0,1)$ such that
$$\forall (x,t)\in\R^{N+1},\ u\in[0,1],\quad f(x,t,u)\geq(\mu(x,t)-\e)u(m-u).$$ 
It follows that the $u_n$ satisfy
$$\partial_t
u_n-\Tr(A_n(x,t)D^2u_n)+q_n(x,t) D u_n\geq(\mu_n(x,t)-\e)u_n(m-u_n),
\quad x\in\R^N,\ t\in\R.$$
On the other hand, the $L^p$ parabolic interior estimates ensure that the
sequences 
$\seq{\partial_t u}$, $\seq{Du}$, $\seq{D^2u}$ are bounded
in $L^p(Q)$ for all $p\in (1,\infty)$ and $Q\Subset\R^{N+1}$. Hence, by the 
embedding theorem,
$\seq{u}$ converges (up to subsequences) locally uniformly in $\R^{N+1}$ to some
function $\t u$, and the derivatives $\partial_t$, $D$, $D^2$ of the $\seq{u}$
weakly converge to those of $\t u$ in $L^p_{loc}(\R^{N+1})$.
Therefore, letting $\t A$, $\t q$ be the locally uniform limits of
(subsequences of) $\seq{A}$, $\seq{q}$ and $\t\mu$ be the $L^\infty$
weak-$\star$ limit of (a subsequence of) $\seq{\mu}$, we infer
that  $$\partial_t
\t u-\Tr(\t A(x,t)D^2\t u)+\t q(x,t) D\t u\geq(\t\mu(x,t)-\e)\t u(m-\t u),
\quad x\in\R^N,\ t\in\R.$$
Hence, $\t u$ is a supersolution of an equation of the type
\eq{princip} whose terms satisfy \eq{hyp-A}-\eq{hyp-f} and \eq{f(0)}.
Moreover, it is easily derived from the definition of the speed
$c$ and the
$L^\infty$ weak-$\star$ convergence to $\t c$, that
$\t u$ satisfies \eq{limits} with $c$ replaced by $\t c$, uniformly with
respect to $t\in\R$. 
In order to apply Proposition \ref{pro:nex} to the function $\t u$, we need to
show that $\t\kappa(0)<0$, where $\lambda\mapsto\t\kappa(\lambda)$ is defined
like $\lambda\mapsto\kappa(\lambda)$, but with $\t A$, $\t q$, $\t\mu-\e$ in
place of $A$, $q$,
$\mu$ respectively.
Namely, the $\kappa(\lambda)$ are the \pe s in the sense of \eq{pe} for the
operators $\t{\mc{
P}}_\lambda$ defined as follows:
$$\begin{array}{rl} \t{\mc{ P}}_\lambda w:=&\partial_tw
-Tr \big(\t A(x,t)D^2 w\big) + \big(\t q(x,t) - 2 \lambda\t A(x,t) e \big) \cdot
Dw \\
&-\big( \lambda^2 e\t A(x,t) e +\lambda\t q(x,t) \cdot e +\t\mu(x,t)-\e \big)
w.\\
\end{array}$$
This will be achieved by showing that
\Fi{limitoperator}
\forall\lambda>0,\quad \t\kappa(\lambda)\leq\kappa(\lambda)+\e,
\Ff
whence $\t\kappa(0)<0$ as soon as $\e<-\kappa(0)$ (recall that $\kappa(0)<0$
by \eq{mu>0}).
Let us postpone for a moment the proof of \eq{limitoperator}.
Applying Proposition \ref{pro:nex} to $\t u$ yields
$$\liminf_{t\to+\infty}\frac1t\int_0^t\t c(s)ds\geq 
-\max_{\lambda>0}\frac{\t\kappa(\lambda)}\lambda=-\frac{\t\kappa(\hat\lambda)}
{\hat\lambda},$$
for some $\hat\lambda>0$.
In virtue of \eq{omegac} and \eq{limitoperator}, from this inequality we
deduce
$$\lm{c}\geq-\frac{\kappa(\hat\lambda)+\e}{\hat\lambda},$$
from which $\lm{c}\geq c^*$ follows by the arbitrariness of $\e$.

It remains to prove \eq{limitoperator}. Let $k>\kappa(\lambda)$. By 
definition \eq{phi=1} there exists $\vp$ such that $\inf \vp>0$, 
$\vp, D\vp \in L^\infty (\R^N\times \R)$ and $\mathcal{P}_\lambda \vp \leq k 
\vp$ in $\R^N\times \R$. We would like to perform on $\vp$ the same limit of 
translations as done before to obtain $\t u$ from $u$.
This would yield a function $\t\vp$ satisfying $\t{\mathcal{P}}_\lambda\t\vp 
\leq(k+\e)\t\vp$. But this argument requires the $L^p_{loc}$ estimates of the 
derivatives $\partial_t$, $D$, $D^2$ of the translated of $\vp$, which are not 
available because $\vp$ is a subsolution and not a solution of an equation.
However, it is possible to replace $\vp$ with a solution of a semilinear 
equation of the type $\mathcal{P}_\lambda w=g(w)$ in $\R^N\times \R$,
with $g$ smooth and such that $g(w)\leq (k+\e)w$, which satisfies the same 
properties as $\vp$, as well as the desired additional regularity properties.
This is done in the proof of Theorem A.1 of 
\cite{RossiRyzhik}, whose arguments can be exactly repeated here.
We can therefore apply the 
translation argument that provides a function $\t\vp$ such that 
$\t{\mathcal{P}}_\lambda\t\vp \leq(k+\e)\t\vp$. Moreover, $\inf\t\vp>0$ and 
$\sup\t\vp<\infty$. In order to be able to use $\t\vp$ in the definition of 
$\t\kappa(\lambda)$ and derive $\t\kappa(\lambda)\leq k+\e$, we only need to 
have that $\sup|D\t\vp|<\infty$. This property does not follow automatically 
from the $L^p$ estimates and the embedding theorem as in the elliptic case 
treated in 
\cite{RossiRyzhik}. This is the reason why we need the extra assumption 
\eq{hyp-Aextra} on $A$.
Indeed, we use Theorem 1.4 of \cite{PP}, with, using the same notations as in \cite{PP}, $F$ the nonlinear operator associated with equation $\mathcal{P}_\lambda w=g(w)$, 
Hypothesis 1.2 of \cite{PP} being satisfied since $A$ satisfies \eq{hyp-Aextra}, $q$ is bounded and $f=f(x,t,u)$ is bounded with respect to $(x,t,u)\in \R^N\times \R\times [0,1]$, and Hypothesis 
1.3 being satisfied with $\varphi (x,t):= e^{Mt} (1+|x|^2)$ and $M$ large enough. Hence, we get 
a uniform $L^\infty$ bound on $Dw$, where $w$ is the solution of  $\mathcal{P}_\lambda w=g(w)$. Using $w$ instead of $\vp$, we get that this bound is inherited by $\t\vp$ 
and we therefore deduce $\t\kappa(\lambda)\leq k+\e$.
%
As $k>\kappa (\lambda)$ is arbitrary, we eventually get \eq{limitoperator}.

\end{proof}



\end{document}